\documentclass[aap,preprint]{imsart}

\usepackage{amsmath, amsthm, amstext, amssymb, amsfonts, graphicx, bm, subfigure}

\RequirePackage[OT1]{fontenc}
\RequirePackage[numbers]{natbib}
\RequirePackage[colorlinks,citecolor=blue,urlcolor=blue]{hyperref}

\arxiv{math.ST/1311.1738}

\numberwithin{equation}{section}

\newcommand{\PR}{\mathbb P}
\newcommand{\QR}{\mathbb Q}
\newcommand{\mytilde}{\raise.17ex\hbox{$\scriptstyle\mathtt{\sim}$}}

\newtheorem{theorem}{Theorem}[section]
\newtheorem{corollary}[theorem]{Corollary}
\newtheorem{definition}[theorem]{Definition}
\newtheorem{lemma}[theorem]{Lemma}
\newtheorem{proposition}[theorem]{Proposition}
\newtheorem*{remark}{Remark}

\begin{document}

\begin{frontmatter}

\title{Asymptotic quantization of exponential random graphs}
\runtitle{Asymptotic quantization of ERGMs}

\begin{aug}
\author{\fnms{Mei}
\snm{Yin}\thanksref{a1}\ead[label=a1]{mei.yin@du.edu}},
\author{\fnms{Alessandro}
\snm{Rinaldo}\thanksref{a2}\ead[label=a2]{arinaldo@cmu.edu}},
\and
\author{\fnms{Sukhada}
\snm{Fadnavis}\thanksref{a3}\ead[label=a3]{sukhada23@gmail.com}}

\thankstext{a1}{Mei Yin's research was partially supported by NSF grant DMS-1308333.}
\thankstext{a2}{Alessandro Rinaldo's research was partially
supported by grant FA9550-12-1-0392 from the U.S. Air Force Office
of Scientific Research (AFOSR) and the Defense Advanced Research
Projects Agency (DARPA) and by NSF CAREER grant DMS-1149677.}

\runauthor{Mei Yin, Alessandro Rinaldo, and Sukhada Fadnavis}

\affiliation{University of Denver\thanksmark{a1}, Carnegie Mellon
University\thanksmark{a2}, and Harvard University\thanksmark{a3}}

\address{Department of Mathematics\\
University of Denver\\
\printead{a1}\\
\phantom{E-mail:\ }}

\address{Department of Statistics\\
Carnegie Mellon University\\
\printead{a2}\\
\phantom{E-mail:\ }}

\address{Department of Mathematics\\
Harvard University\\
\printead{a3}\\
\phantom{E-mail:\ }}
\end{aug}

\begin{abstract}
We describe the asymptotic properties of the edge-triangle
exponential random graph model as the natural parameters diverge
along straight lines. We show that as we continuously vary the
slopes of these lines, a typical graph drawn from this model
exhibits quantized behavior, jumping from one complete
multipartite graph to another, and the jumps happen precisely at
the normal lines of a polyhedral set with infinitely many facets.
As a result, we provide a complete description of all asymptotic
extremal behaviors of the model.
\end{abstract}

\begin{keyword}[class=AMS]
\kwd[Primary ]{05C80} \kwd[; secondary ]{62F99} \kwd{82B26}
\end{keyword}

\begin{keyword}
\kwd{exponential random graphs} \kwd{graph limits} \kwd{normal
cone} \kwd{asymptotic quantization}
\end{keyword}

\end{frontmatter}

\section{Introduction}
\label{intro}


Over the last decades, the availability and widespread diffusion
of network data on typically very large scales have created the
impetus for the
development of 
 new theories and methods for the analysis of large random graphs.
Despite the vast and rapidly growing body of literature on
network analysis (see, e.g., \cite{F1,F2,GXFA,Kol,Newman.book} and
references therein),  the study of the asymptotic behavior of
network models has proven rather difficult in most cases.
As a result,
methodologies for carrying out basic statistical
tasks such as parameter estimation, hypothesis and goodness-of-fit
testing with provable asymptotic guarantees have yet to be
developed for most network models.


Exponential random graph models \cite{FS,HL,WF} form one of the
most prominent
class of statistical models for random graphs, but also one for
which the issue of lack of understanding of their general
asymptotic properties is particularly pressing. These rather
generic models  are exponential families of probability
distributions over graphs, whereby the natural sufficient
statistics are virtually any functions on the space of graphs that
are deemed to capture essential features
 of interest. Such statistics may include, for instance, the number of edges
or copies of any finite subgraph, as well as more complex
quantities such as the degree distribution, and
combinations thereof. Exponential random graph models are especially useful when one wants to
construct models that resemble observed networks, but without specifying an
explicit network formation mechanism. They are among the most
widespread models in network analysis, with  numerous applications
in the social sciences, statistics,
statistical mechanics, economics and other disciplines. See, e.g.,
\cite{ERGM4,ERGM1,ERGM2,S,ERGM3}.

Despite being exponential families with finite support,
the large scale properties of exponential random graph models
are neither simple nor standard.
In fact, for
random graph models which do not assume independent edges, very
little was known about their asymptotics (but see \cite{B} and
\cite{HJ})  until the work of Chatterjee and
Diaconis \cite{CD}. By combining the recent theory of graphons (see, e.g.,
\cite{L2})
with a deep result about large deviations for the Erd\H{o}s-R\'{e}nyi model
established by Chatterjee and Varadhan \cite{CV}, they showed that the limiting properties
of many exponential random graph models can be obtained by solving
a certain variational problem in the graphon space (see Section
\ref{graphon} for a summary of these important results). Such
a framework  provides a principled way of resolving the large
sample behavior of exponential random graph models and has in fact
led to novel and insightful findings about these models. Still, the variational technique in \cite{CD} often does
not admit an explicit solution and additional work is required.

In this article we advance our understating of the
asymptotics of exponential random graph models by giving a
complete characterization of the asymptotic extremal properties of
a simple yet challenging $2$-parameter exponential random graph
model. In detail, for $n \geq 2$, let $\mathcal{G}_n$ denote
the set of all simple (i.e., undirected, with no loops or
multiple edges) labeled graphs on $n$ nodes. Notice that $|\mathcal{G}_n|
= 2^{ n \choose 2}$. For a graph $G_n \in \mathcal{G}_n$
and a simple labeled graph $H$ with vertex set $V(H)$ such that $|V(H)| \leq n$, the density homomorphism of
$H$ in $G_n$ is
\begin{equation}
\label{t} t(H, G_n)=\frac{|\text{hom}(H,
G_n)|}{n^{|V(H)|}},
\end{equation}
where $|\text{hom}(H, G_n)|$ denotes the number of homomorphisms
from $H$ into $G_n$,  i.e., edge preserving maps from $V(H)$ to
$V(G_n)$. Thus $t(H,G_n)$ is the probability that a random
mapping from $V(H)$ into $V(G_n)$ is edge-preserving.
For each $n$, we consider the exponential family $\{
\PR_{n,\beta}, \beta \in \mathbb{R}^2\}$ of probability distributions
on $\mathcal{G}_n$ which assigns to a graph $G_n \in
\mathcal{G}_n$ the probability
\begin{equation}
    \label{pmf} \PR_{n,\beta}(G_n)=\exp\left(n^2(\beta_1
t(H_1,G_n)+\beta_2 t(H_2,G_n)-\psi_n(\beta))\right),
\end{equation}
where $\beta=(\beta_1,\beta_2)$ are tuning parameters, $H_1 = K_2$ is a single edge, $H_2$ is a pre-chosen finite
simple graph (say a triangle, a two-star, etc.), and
$\psi_n(\beta)$ is the normalizing constant satisfying
\begin{equation}
\label{psi} \exp\left(n^2 \psi_n(\beta)\right)=\sum_{G_n \in
\mathcal{G}_n} \exp\left(n^2 \left(\beta_1 t(H_1,G_n)+\beta_2
t(H_2,G_n)\right)\right).
\end{equation}
%
In statistical physics, we refer to $\beta_1$ as the particle parameter and $\beta_2$ as the
energy parameter \cite{Ma, RS}. Correspondingly, the exponential model (\ref{pmf}) is said to be ``attractive'' if
$\beta_2$ is positive and ``repulsive'' if $\beta_2$ is negative. Although seemingly simple, this model is well known for its
wealth of non-trivial features (see, e.g., \cite{H:03,R}) and
challenging asymptotics (see \cite{CD}).

A natural question to ask is
how different values of the parameters $\beta_1$ and $\beta_2$
would impact the global structure of a typical random graph $G_n$
drawn from \eqref{pmf} for large $n$. We will generalize the extremal results of Chatterjee and Diaconis \cite{CD} and
complete an exhaustive study of all the extremal properties
 of \eqref{pmf}
 when $H_2 = K_3$, i.e., when $H_2$ is a triangle. Identifying the extremal properties of the edge-triangle model is
not only interesting from a mathematical point of view, but
also provides new insights into the expressive
power of the model itself. Towards that end, we will generalize the {\it
double asymptotic} framework of \cite{CD} and consider two limit processes: the network size $n$ grows unbounded and the
 natural  parameters $\beta$ diverge along generic {\it straight} lines.
In our analysis we will elucidate the relationship between all
possible directions along which the natural parameters can diverge
and the way the model tends to place most of its mass on graph
configurations that resemble complete multipartite graphs for
large enough $n$. As it turns out, looking just at straight lines
is precisely what is needed to categorize all extremal behaviors
of the model. Especially, when $n$ is large and $\beta_2$ is large negative, the edge-triangle model is used in the modeling of the crystalline structure of solids near the energy ground state. As we continuously vary the slopes of these generic lines, a progressive transition through finer and finer multipartite structures is revealed. We summarize our contributions as follows.

First, we extend the variational analysis technique of \cite{CD}
         to show that the set of all extremal (in $\beta$) distributions of the
         edge-triangle model consists of
         degenerate distributions on all Tur\'{a}n
         graphons when taking the size of the network $n$ as infinity. We further exhibit a partition of all the possible half-lines or
         directions
         in $\mathbb{R}^2$ in the form of a collection of cones with apexes
         at the origin and disjoint interiors, whereby two
          sequences of natural parameters $\beta$ diverging along different
         half-lines in the same cone yield the same asymptotic extremal
         behavior. We refer to this result as an asymptotic quantization of the
         parameter space. Finally, we
         identify a countable set of critical directions along
         which the extremal behaviors of the edge-triangle model cannot be
         resolved.

We then present a
         different
         technique of analysis that relies on the notion of closure of
         exponential families \cite{BRN:78}.  In this approach, the extremal
         properties of the model correspond to
         its asymptotic (in $n$) boundary in the total variation
         topology. The main advantage of this method
         is its ability to resolve the model also along critical directions.
         Specifically, we will demonstrate that, along each such direction, as $n$ grows, the model becomes discontinuous
        in the natural parametrization by $\beta$, and describe explicitly the points of
        discontinuity. We remark that this phenomenon is asymptotic: for finite $n$ the
         natural parametrization by $\beta$
         is always continuous, even on the boundary of the total variation
         closure of the model. Unlike variational techniques, which characterize
         the properties of the
         model as a function of the parameter values $\beta$ when
         the network size $n$ is infinite, the approach based on the total
         variation closure considers $n$ finite (but increasing)
         and lets $\beta$ tend to infinity appropriately for each fixed $n$.

A central ingredient of our analysis is the use of simple yet effective geometric
         arguments that combine recent results in asymptotic extremal
	 graph theory \cite{Razborov} with the theory of graphons \cite{L2} and
	 the traditional theory of exponential families. Both the quantization of the parameter space and the
         identification of critical directions stem from the dual geometric
         property of a bounded convex polygon with infinitely many edges,
         which can be thought of as an asymptotic mean value
         parametrization of the edge-triangle exponential model. We expect this
         framework to apply more generally to other exponential random graph
         models.

The rest of this paper is organized as follows. In Section
\ref{limit} we provide some basics of graph limit theory,
summarize the main results of \cite{CD} and introduce key
geometric quantities. In Section \ref{attractive} we investigate
the asymptotic behavior of ``attractive'' 2-parameter exponential
random graph models along general straight lines. In Section
\ref{vertical} we analyze the asymptotic structure of
``repulsive'' 2-parameter exponential random graph models along
vertical lines. In Sections \ref{jump} and \ref{ale} we examine
the asymptotic feature of the edge-triangle model along general
straight lines. Section \ref{illu} shows some illustrative figures and Section \ref{further} is devoted to further discussions. All the proofs are in the appendix.

\section{Background}
\label{limit}

Below we will provide some background on the theory of graph
limits and its use in exponential random graph models, focusing in
particular on the edge-triangle model.

\subsection{Graph limit theory and graph limits of exponential random graph models}
\label{graphon}
A series of recent important contributions by mathematicians
and physicists 
have led to a unified and elegant theory of limits of sequences of
dense graphs. See, e.g.,  \cite{BCLSV1, BCLSV2,
BCLSV3, L1, LS} and the book \cite{L2} for a comprehensive account
and references. See also the related work on exchangeable arrays,
where some of these results had already been derived:
\cite{Aldous, DJ:08, Hoover, Kallenberg:2005, Lauritzen:2003}.


Here are the basics of this theory. A sequence $\{ G_n
\}_{n=1,2,\ldots}$ of graphs, where we assume $G_n
\in\mathcal{G}_n$ for each $n$, is said to converge when, for
every simple graph $H$, $\lim_{n \to \infty} t(H, G_n)=t(H)$
    for some $t(H)$. The main result in \cite{LS} is a complete characterization
    of all limits of converging graph sequences, which are shown to correspond to the
    functional space $\mathcal{W}$ of all symmetric measurable functions from  $[0,1]^2$
    into $[0,1]$, called {\it graph limits} or {\it graphons}. Specifically, the
    graph sequence $\{
    G_n\}_{n=1,2,\ldots}$ converges
    if and only if there exists a graphon $f \in \mathcal{W}$ such that, for every
    simple graph $H$ with vertex set $\{1,\ldots,k\}$  and edge set $E(H)$,
    \begin{equation}
    \label{tt}
    \lim_{n \to \infty} t(H, G_n) = t(H, f) :=\int_{[0,1]^{k}}\prod_{\{i,j\}\in E(H)}f(x_i, x_j)dx_1\cdots
dx_k.
    \end{equation}
Any finite
graph $G_n$ can be represented as a graphon of the form
\begin{equation}
\label{f}
f^{G_n}(x,y)=\left\{%
\begin{array}{ll}
    1, & \hbox{if $(\lceil nx \rceil, \lceil ny \rceil)$ is an edge in $G_n$,} \\
    0, & \hbox{otherwise,} \\
\end{array}%
\right.
\end{equation}
where $\lceil x \rceil$ denotes
    the smallest integer no less than $x \in \mathbb{R}$.
Among the main advantages of the graphon framework is its ability
to represent the limiting properties of sequences of graphs $G_n$, which
are discrete objects that lie in different probability spaces, with the unified functional space $\mathcal{W}$.
Lov\'{a}sz and
Szegedy \cite{LS} showed that convergence of all graph
homomorphism densities is equivalent to a certain cut metric convergence in the quotient graphon space
$(\widetilde{\mathcal{W}}, \delta_{\square})$, which is obtained after taking into account measure preserving transformations. A sequence of
(possibly random) graph $\{G_n\}_{n=1,2,\ldots}$ converges to a
graphon $f$ if and only if $\delta_{\square}(\tilde{f}^{G_n},\tilde{f})\to 0\hbox{
in
  probability as } n\to \infty$,
    where $f^{G_n}$ is defined in \eqref{f}.
It may be worth emphasizing that
graphons described here are tailored to limits of {\it dense} graphs, i.e., graphs
having order $n^2$ edges. In particular, they cannot discern any graph property in the sequence that depends on a
number of edges of order $o(n^2)$.

In a recent important paper, Chatterjee and Diaconis \cite{CD} utilized the nice analytic properties of the
metric space $(\widetilde{\mathcal{W}}, \delta_{\square})$ and examined
the asymptotic behavior of exponential random graph models. For
the purpose of this paper, two results from \cite{CD} are
particularly significant. The first result, which is an
application of a deep large deviations result of \cite{CV}, is
Theorem 3.1 in \cite{CD}. When applied to the $2$-parameter
exponential random graph models mentioned above it implies that
the limiting normalizing constant $\psi_\infty(\beta)=\lim_{n\to
\infty}\psi_n(\beta)$
 always exists and is given by
\begin{equation}
\label{setmax} \psi_\infty(\beta)=\sup_{\tilde{f}\in
\tilde{\mathcal{W}}}\left(\beta_1 t(H_1,f)+\beta_2 t(H_2,f)-
\iint_{[0,1]^2}I(f)dxdy\right),
\end{equation}
where $f$ is any representative element of the equivalence class
$\tilde{f}$, and
\begin{equation}
I(u)=\frac{1}{2}u\log u+\frac{1}{2}(1-u)\log(1-u).
\end{equation}
The second result, Theorem 3.2 in \cite{CD}, is concerned with the
solutions of the above variational optimization problem. In
detail, let $\tilde{F}^{*}(\beta)$ be the subset of
$\widetilde{\mathcal{W}}$ where (\ref{setmax}) is maximized. Then,
the quotient image $\tilde{f}^{G_n}$ of a random graph $G_n$ drawn
from (\ref{pmf}) must lie close to $\tilde{F}^{*}(\beta)$ with
probability vanishing in $n$, i.e.,
\begin{equation}\label{eq:32CD}
\delta_{\square}(\tilde{f}^{G_n}, \tilde{F}^*(\beta))\to 0\hbox{
in
  probability as } n\to \infty.
\end{equation}


Due to its complicated structure, the variational problem
(\ref{setmax}) is not always explicitly solvable. So far major
simplification has only been achieved when $\beta_2$ is positive
or negative with small magnitude. For $\beta_2$ lying in
these parameter regions, Chatterjee and Diaconis \cite{CD} showed that $G_n$ behaves like an Erd\H{o}s-R\'{e}nyi
graph $G(n, u)$ in the large $n$ limit, where $u$ is picked
randomly from the set $U$ of maximizers of a reduced form of
(\ref{setmax}):
\begin{equation}
\label{lmax} \psi_{\infty}(\beta)=\sup_{0\leq u\leq
1}\left(\beta_1 u^{e(H_1)}+\beta_2 u^{e(H_2)}-I(u)\right),
\end{equation}
where $e(H_i)$ is the number of edges in $H_i$. (There are also
related results in H\"{a}ggstr\"{o}m and Jonasson \cite{HJ} and
Bhamidi et al. \cite{B}.) Chatterjee and Diaconis \cite{CD} also
studied the case in which $H_1 = K_2$ and $H_2$ is arbitrary,
$\beta_1$ is fixed and $\beta_2 \rightarrow -\infty$, and showed that a typical graph
$G_n$ from \eqref{pmf} will be close
to a random subgraph of a complete multipartite graph with the
number of classes depending on the chromatic number of $H_2$ (see
Section \ref{vertical} for the exact statement of this result).

\subsection{Edge-triangle exponential random graph model and its asymptotic
geometry} \label{extreme} In this article we focus almost exclusively on the
edge-triangle model, which is the exponential random graph model
obtained by setting in \eqref{pmf} $H_1 = K_2$ and $H_2 = K_3$.
Explicitly, in the edge-triangle model the probability of a graph
$G_n \in \mathcal{G}_n$ is
\begin{equation}
    \label{eq:edge-triangle} \PR_{n,\beta}(G_n)=\exp\left(n^2(\beta_1
t(K_2,G_n)+\beta_2 t(K_3,G_n)-\psi_n(\beta))\right),
\end{equation}
where $\psi_n(\beta)$ is given in \eqref{psi} and there are no restrictions on how the natural parameters $\beta$ diverge. Below we describe
the asymptotic geometry of this model, which underpins much of our
analysis.

To start off, for any $G_n \in \mathcal{G}_n$, the vector of the
densities of graph homomorphisms of $K_2$ and $K_3$ in $G_n$ takes
the form
\begin{equation}\label{eq:tGn}
t(G_n) =  \left(
\begin{array}{c}
    t(K_2,G_n)\\
    t(K_3,G_n)\\
\end{array}
\right) =  \left(
\begin{array}{c}
    \frac{2 E(G_n)}{n^2}\\
    \frac{6 T(G_n)}{n^3}
\end{array}
\right) \in [0,1]^2,
\end{equation}
    where $E(G_n)$ and $T(G_n)$ are the number of subgraphs of $G_n$ isomorphic
    to $K_2$  and $K_3$, respectively.
    Since every finite graph can be represented as a graphon, we can extend
    $t$ to a map from $\mathcal{W}$ into $[0,1]^2$ by setting (see (\ref{tt}))
\begin{equation}
\label{th} t(f) = \left(
\begin{array}{c}
    t(K_2,f)\\
    t(K_3,f)
\end{array}
\right),\quad f \in \mathcal{W}.
\end{equation}

As we will see, the asymptotic extremal behaviors of the
edge-triangle
   model can  be fully characterized by the geometry of two compact subsets of
   $[0,1]^2$.
   The first is the set
   \begin{equation}\label{eq:R}
    R = \{ t(f), f \in \mathcal{W}\}
   \end{equation}
    of all realizable values of the edge and triangle density homomorphisms as $f$
    varies over $\mathcal{W}$. The second set, $P$, is  the convex hull of $R$,
    i.e.,
    \begin{equation}
        \label{eq:P}
        P = \mathrm{convhull}(R).
    \end{equation}
    Figures \ref{et} and \ref{p} depict $R$ and $P$, respectively.

To describe the properties of the sets $R$ and $P$, we
introduce some quantities
 that we will use throughout this paper.
   For $k=0,1,\ldots$,  we set $v_k =
    t(f^{K_{k+1}})$, where $f^{K_1}$ is the identically zero graphon and,  for
    any integer $k>1$,
    \begin{equation}
    \label{eq:turan-graphon}
f^{K_{k}}(x,y) = \left\{%
\begin{array}{ll}
    1 & \hbox{if $\lceil x k \rceil \neq  \lceil y k \rceil$,} \\
    0 & \hbox{otherwise,} \\
\end{array}%
\right. \quad (x,y) \in [0,1]^2
\end{equation}
    is the Tur\'{a}n graphon with $k$ classes. Thus,
\begin{equation}
    \label{eq:vk}
       v_k = \left(
       \begin{array}{c}
       \frac{k}{k+1}\\
\frac{k(k-1)}{(k+1)^2}
\end{array}\right), \quad k=0,1,\ldots.
\end{equation}
Note that any graphon $f$ with $t(f)=v_k$ is equivalent to the Tur\'{a}n graphon $f^{K_{k+1}}$.
The name Tur\'{a}n graphon is due to the easily verified fact that
\[
    v_k = \lim_{n \rightarrow \infty} v_{k,n}, \quad \text{for each }  k=1,2,\ldots,
    \]
    with $v_{k,n} = t(T(n,k+1))$, the homomorphism densities of $K_2$ and $K_3$
    in $T(n,k+1)$, i.e., a
    Tur\'{a}n graph on $n$ nodes with $k+1$ classes. Tur\'{a}n graphs are well known to provide the solutions of many extremal dense graph
    problems (see, e.g., \cite{Diestel}), and will turn out to be the extremal graphs for
    the edge-triangle model as well.
		
    The set $R$ is a classic and well studied object in asymptotic extremal
    graph theory,
    even though the precise shape of its boundary was
    determined only recently  (see, e.g., \cite{Bo, Fisher, Goodman, LS1} and
the book
    \cite{L2}). Letting $e$ and $t$ denote the coordinate corresponding to the
    edge and triangle density homomorphisms, respectively, the upper boundary curve of $R$ (see Figure \ref{et}), is given by the equation $t =
    e^{3/2}$, and can be derived using the Kruskal-Katona theorem (see Section
    16.3 of \cite{L2}).
The lower boundary curve is trickier. The trivial lower bound of
$t = 0$, corresponding to the horizontal segment, is attainable at
any $0 \leq e \leq 1/2$ by graphons describing the (possibly
asymptotic) edge density of subgraphs of complete bipartite
graphs.
 For $e \geq 1/2$, the optimal
bound was obtained recently by Razborov \cite{Razborov}, who
established, using the flag algebra calculus, that for $(k-1)/k
\leq e \leq k/(k+1)$ with $k \geq 2$,
\begin{equation}
\label{Ra} t \geq \frac{(k-1) \left(k-2\sqrt{k(k-e(k+1))}\right)
\left(k+ \sqrt{k(k-e(k+1))} \right)^2}{k^2 (k+1)^2}.
\end{equation}
All the curve segments describing the lower boundary of $R$ are
easily seen to be strictly convex, and the boundary points of
these segments are precisely the points $v_k$, $k=0,1,\ldots$.

    The following Lemma \ref{lem:P}
    is a direct consequence of Theorem 16.8 in \cite{L2} (see page 287 of the
    same reference for details). Below,  $\mathrm{cl}(A)$ denotes the
    topological closure of
    the set $A \subset \mathbb{R}^2$.

    \begin{lemma}\label{lem:P}
    \begin{enumerate}
    \item $R = \mathrm{cl} \big( \left\{ (t(G_n), G_n \in \mathcal{G}_n, n=1,2,\ldots
    \right\} \big).$
    \item The extreme points of $P$ are the points $ \{ v_k, k=0,1,\ldots\}$
        and the point $(1,1) = \lim_{k \rightarrow \infty} v_k$.
    \end{enumerate}
\end{lemma}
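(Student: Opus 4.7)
The plan is to combine the standard machinery of graph limits with the explicit boundary description of $R$ recalled just above the lemma.

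For part (1), I would prove the two inclusions separately. The inclusion $\mathrm{cl}(\{t(G_n)\}) \subseteq R$ follows from the fact that $(\widetilde{\mathcal{W}}, \delta_{\square})$ is compact and the homomorphism density map $t : \widetilde{\mathcal{W}} \to [0,1]^2$, $\tilde f \mapsto (t(K_2,f), t(K_3,f))$, is continuous, so that $R = t(\widetilde{\mathcal{W}})$ is compact, hence closed, and contains every $t(G_n) = t(f^{G_n})$. For the reverse inclusion, I would invoke the density of the finite-graph graphons in $(\widetilde{\mathcal{W}}, \delta_{\square})$: given any $\tilde f$, pick $G_n$ with $\delta_{\square}(\tilde f^{G_n}, \tilde f) \to 0$, and continuity of $t$ gives $t(f) = \lim_n t(G_n) \in \mathrm{cl}(\{t(G_n)\})$.

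For part (2), the strategy is first to describe $P = \mathrm{convhull}(R)$ as an infinite convex polygon with vertex set $\{v_k\}_{k\ge 0}\cup\{(1,1)\}$, and then to identify the extreme points of $P$ with these vertices. Using the description of $\partial R$ just recalled: the upper boundary is the Kruskal--Katona arc $t = e^{3/2}$ joining $(0,0)$ to $(1,1)$, and its chord (the line $t = e$) lies strictly above it on $(0,1)$, so the chord is not contained in $R$. The lower boundary is the horizontal segment from $v_0$ to $v_1$ together with the Razborov arcs connecting $v_k$ to $v_{k+1}$ for $k \ge 1$; a direct check on \eqref{Ra} shows that each such arc lies strictly on the side of its chord $v_k v_{k+1}$ opposite to the interior of $R$, so the chord lies outside $R$. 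Taking the convex hull therefore replaces each arc by the corresponding chord, making $P$ the convex polygon bounded above by the single segment from $(0,0)$ to $(1,1)$ and below by the countable concatenation of chords $v_0 v_1, v_1 v_2, v_2 v_3, \ldots$, which accumulates at $(1,1) = \lim_k v_k$.

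It then remains to observe that the extreme points of such an infinite convex polygon are exactly its vertices. Any point strictly interior to one of the edges is a proper convex combination of the edge's two endpoints and so is not extreme. Conversely, each $v_k$ admits a supporting line of $P$ whose outward normal lies strictly between the outward normals of the two adjacent edges of $P$, so its intersection with $P$ reduces to $\{v_k\}$, which certifies extremality; the same argument applies to $(1,1)$, viewed as the common endpoint of the top segment and the limiting edge of the lower polygonal path. The main obstacle I foresee is the passage from the boundary description of $R$ to the polygonal description of $P$: concretely, verifying that the Kruskal--Katona arc and each Razborov arc lie strictly on the side of their chord opposite to $R$'s interior, so that the convex hull operation indeed replaces them by chords. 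Both verifications reduce to elementary calculus on the explicit formulas, but some care is needed because $R$ itself is not convex, so one must keep track of on which side of each chord the interior of $R$ lies.
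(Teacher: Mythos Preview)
Your proposal is correct, and in fact goes well beyond what the paper does: the paper gives no proof at all of Lemma~\ref{lem:P}, merely stating that it ``is a direct consequence of Theorem~16.8 in \cite{L2} (see page~287 of the same reference for details).'' Your argument for part~(1), via compactness of $(\widetilde{\mathcal{W}},\delta_{\square})$, continuity of $t$, and density of finite graphs, is exactly the content of that cited material. Your argument for part~(2), reconstructing $P$ from the explicit boundary of $R$ and then reading off the extreme points, is a legitimate and self-contained alternative to simply citing the reference.

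One geometric slip to fix in your write-up of the lower boundary: each Razborov arc between $v_k$ and $v_{k+1}$ lies strictly \emph{above} its chord $L_k$ (equivalently, on the \emph{same} side of $L_k$ as the interior of $R$), not on the opposite side as you wrote. You can verify this directly from \eqref{Ra}; for instance, at $k=2$ and the midpoint $e=7/12$ the arc gives $t\approx 0.119$ while the chord gives $t=1/9\approx 0.111$. The paper's phrase ``strictly convex'' for these arcs is potentially misleading in this regard. Your \emph{conclusion} is nonetheless correct: since the arc lies above the chord and is the lower boundary of $R$, the open chord lies strictly below $R$, so taking the convex hull replaces each arc by its chord and the lower boundary of $P$ is the polygonal path $v_0 v_1 v_2\cdots$. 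With this orientation corrected, the rest of your argument (exposed faces, supporting lines at each $v_k$, and the accumulation point $(1,1)$) goes through as written.
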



The first result of Lemma \ref{lem:P} indicates that the set of
edge and triangle homomorphism densities of all finite graphs is
dense in $R$. The second result implies that the boundary of $P$
consists of infinitely many segments with endpoints $v_k$, for
$k=0,1,\ldots$, as well as the line segment joining $v_0 = (0,0)$
and $(1,1) = \lim_{k \rightarrow \infty} v_k$. 
For $k=0,1,\ldots$, let $L_k$ be the segment joining the adjacent
vertices $v_k$ and $v_{k+1}$, and $L_{-1}$ the segment joining
$v_0$ and the point $(1,1)$. Each such $L_k$ is an exposed face of
$P$ of maximal dimension $1$, i.e., a facet. Notice that the
length of the segment $L_k$ decreases monotonically to zero as $k$
gets larger. For any $k>0$, the slope of the line passing through
$L_k$ is
\[\frac{k(3k+5)}{(k+1)(k+2)},
\]
    which increases monotonically to $3$ as $k\rightarrow \infty$.
Simple algebra yields that the facet $L_k$ is exposed by the
vector
    \begin{equation}\label{eq:outer}
    o_k =     \left\{
            \begin{array}{ll}
        (-1,1) & \text{if } k=-1,\\
        (0,-1) & \text{if } k=0,\\
        \left( 1, - \frac{(k+1)(k+2)}{k(3k+5)} \right) & \text{if }k =
            1,2,\ldots.\\
        \end{array}
        \right.
     \end{equation}

The vectors $o_k$ will play a key role in determining the
asymptotic behavior of the edge-triangle model, so much so that
they deserve their own names.

\begin{definition}
     The vectors $\{o_k, k=-1,0,1,\ldots \}$ are the {\it critical
     directions} of the edge-triangle model.
\end{definition}

\begin{figure}
\centering
\includegraphics[width=3in]{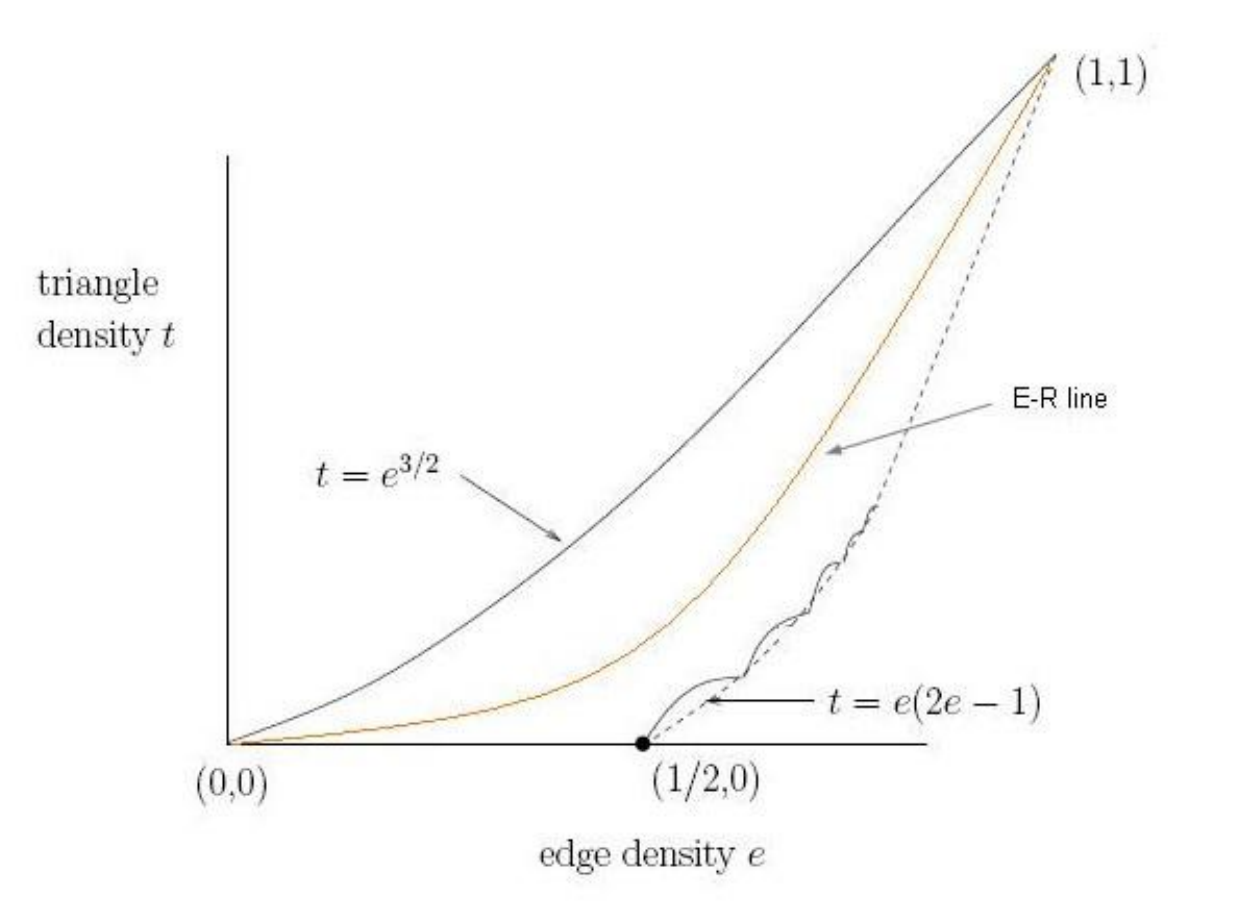}
\caption{The set $R$ of all feasible edge-triangle homomorphism
densities,
    defined in \eqref{eq:R}.} \label{et}
\end{figure}

\begin{figure}
\centering
\includegraphics[width=3in]{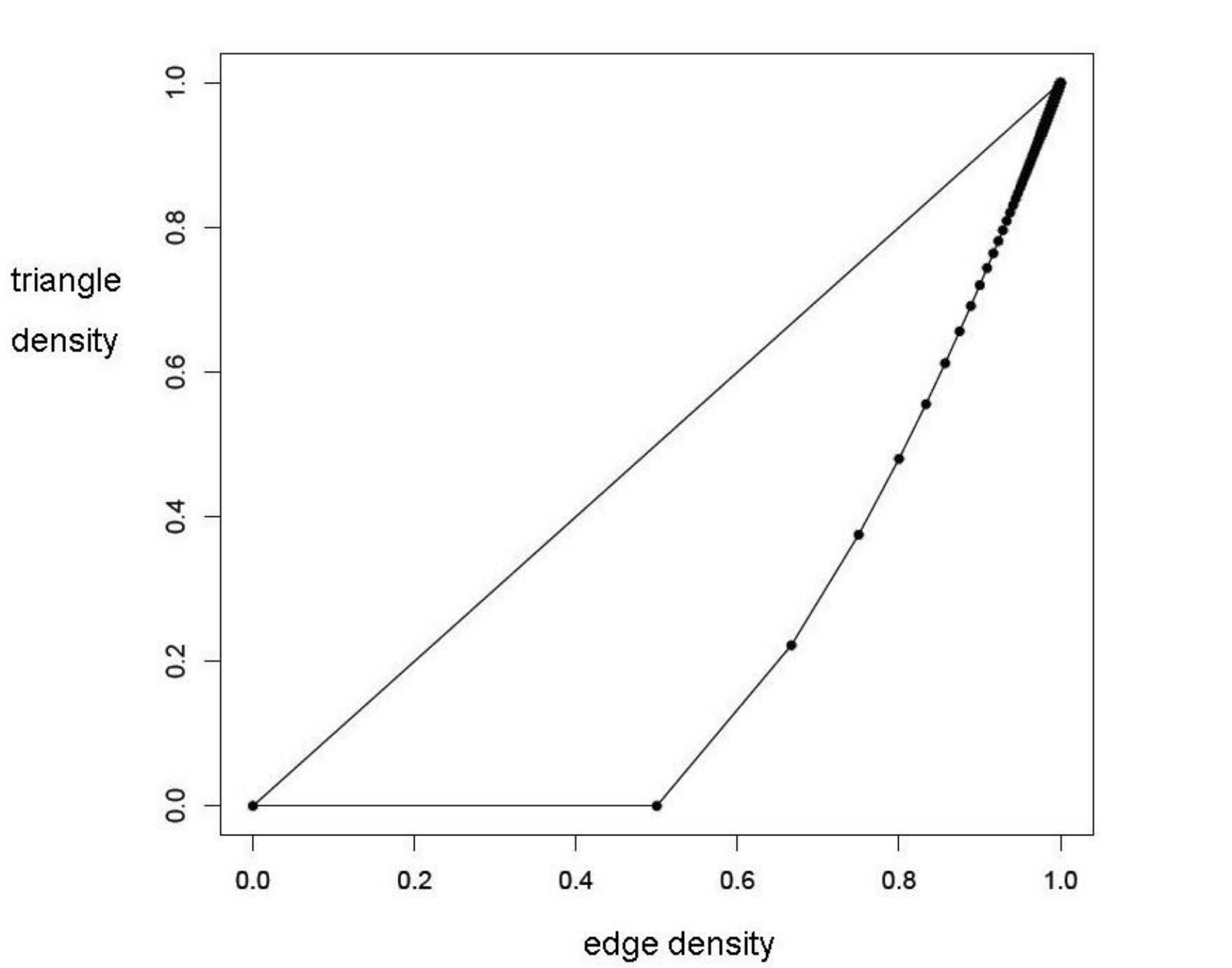}
\caption{The set $P$ described in \eqref{eq:P}.} \label{p}
\end{figure}

     For a set $A \subset \mathbb{R}^2$, define $\mathrm{cone}(A)$ as the set
      of all conic combinations of points in $A$. It follows that the outer normals to the facets of $P$ are given by
      \[
           \mathrm{cone}(o_k), \quad k=-1,0,1,\ldots,
      \]
      i.e., by rays in $\mathbb{R}^2$
  emanating from the origin and going along the
direction of $o_k$. Finally, for $k=0,1,\ldots$ let  $C_k =
\mathrm{cone}(o_{k-1},o_k)$ denote the normal cone to $P$ at
$v_k$, i.e., a 2-dimensional pointed polyhedral cone spanned by $o_{k-1}$ and
$o_k$. Denote by $C_k^{\circ}$ the topological interior of $C_k$. Then, since $P$ is bounded, for any non-zero $x \in
\mathbb{R}^2$, there exists one $k$ for which either $x \in
\mathrm{cone}(o_k)$ or $x \in C_k^{\circ}$. The normal cones to the faces of $P$ form a locally finite
polyhedral complex of cones, shown in Figure \ref{dir}. As our
results will demonstrate, each one of these cones uniquely
identifies one of infinitely many asymptotic extremal behaviors of
the edge-triangle model.

\section{Variational analysis}
\label{mei}

In this section we characterize the extremal properties of
$2$-parameter exponential random graphs and especially of the
edge-triangle model using the variational approach described in
Section \ref{graphon}. Chatterjee and Diaconis \cite{CD} showed
that a typical graph drawn from a 2-parameter exponential random
graph model with $H_1$ an edge and $H_2$ a fixed graph with
chromatic number $\chi$  is a $(\chi-1)$-equipartite graph when
$n$ is large, $\beta_1$ is fixed, and $\beta_2$ is large and
negative, i.e., when the two parameters trace a vertical line
downward.

In the hope of discovering other interesting extremal behaviors,
we investigate the asymptotic structure of 2-parameter exponential
random graph models along general straight lines. In particular,
we will study sequences of model parameters of the form $\beta_1 =
a \beta_2 + b$, where $a$ and $b$ are constants and $|\beta_2|
\rightarrow \infty$. Thus, for any $\beta = (\beta_1,\beta_2) \in
\mathbb{R}^2$, we can regard the quantities $\tilde{F}^*(\beta)$
and $\psi_\infty(\beta)$, defined in Section \ref{graphon},  as
functions of $\beta_2$ only, and therefore will write them as
$\tilde{F}^*(\beta_2)$ and $\psi_\infty(\beta_2)$ instead.

While we only give partial results for general exponential random
graphs, we are able to provide a nearly complete characterization
of the edge-triangle model. Even more refined results are
possible, as will be shown in Section \ref{ale}.

\subsection{Asymptotic behavior of attractive $2$-parameter exponential random
graph models along general lines} \label{attractive} We will first
consider the asymptotic behavior of ``attractive'' $2$-parameter
exponential random graph models as $\beta_2 \rightarrow \infty$.
We will show that for $H_1$ an edge and $H_2$ any other finite
simple graph, in the large $n$ limit, a typical graph drawn from
the exponential model becomes complete under the topology induced
by the cut distance if $a>-1$ or $a=-1$ and $b>0$; it becomes
empty if $a<-1$ or $a=-1$ and $b<0$; while for $a=-1$ and $b=0$,
it either looks like a complete graph or an empty graph. Below,
for a non-negative constant $c$, we will write $u=c$ when $u$ is
the constant graphon with value $c$.

\begin{theorem}
\label{thm.at} Consider the 2-parameter exponential random graph
model \eqref{pmf}, with  $H_1 = K_2$ and $H_2$ a different,
arbitrary graph. Let $\beta_1=a\beta_2+b$. Then
\begin{equation}
    \lim_{\beta_2 \rightarrow \infty} \sup_{\tilde{f} \in \tilde{F}^*(\beta_2) }
    \delta_{\square} (\tilde{f}, \tilde{U})=0,
\end{equation}
where the set $U \subset \mathcal{W}$ is determined as follows:
\begin{itemize}
    \item $U=\{1\}$ if $a>-1$ or $a=-1$ and $b>0$,
    \item $U=\{0, 1\}$ if
$a=-1$ and $b=0$, and \item $U=\{0\}$ if $a<-1$ or $a=-1$ and
$b<0$.
\end{itemize}
\end{theorem}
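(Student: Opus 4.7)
My plan is to analyze the variational formula \eqref{setmax} directly, reorganizing its objective after substituting $\beta_1 = a\beta_2+b$ as
\[
F_{\beta_2}(\tilde f) = \beta_2\bigl(a\,t(K_2,f)+t(H_2,f)\bigr) + b\,t(K_2,f) - \iint_{[0,1]^2} I(f)\,dx\,dy.
\]
The foundational inequality is $t(H_2,f) \le t(K_2,f)$ for every $f\in\mathcal{W}$: since $H_2$ has at least one edge and $0\le f\le 1$, bounding all but one factor by $1$ in the integral defining $t(H_2,f)$ yields the claim. Combined with $0\le -\iint I(f)\le \tfrac12\log 2$, this gives the upper bound
\[
F_{\beta_2}(\tilde f) \le \bigl((a+1)\beta_2+b\bigr)\,t(K_2,f) + \tfrac12\log 2,
\]
while the baseline values $F_{\beta_2}(\tilde 1) = (a+1)\beta_2+b$ and $F_{\beta_2}(\tilde 0)=0$ furnish lower bounds on the supremum.

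In the noncritical regime $a\ne -1$ the coefficient $(a+1)\beta_2+b$ has a fixed sign once $\beta_2$ is large. When $a>-1$, any $\tilde f$ with $\delta_\square(\tilde f,\tilde 1)\ge\varepsilon$ must satisfy $t(K_2,f)\le 1-\varepsilon$ (since $d_\square(f,1)\le \iint(1-f)\,dx\,dy = 1-t(K_2,f)$), and substituting into the upper bound makes $F_{\beta_2}(\tilde f)$ strictly smaller than $F_{\beta_2}(\tilde 1)$ for all $\beta_2$ larger than some $B(\varepsilon)$; this is exactly the required uniform cut-closeness of maximizers to $\tilde 1$. The case $a<-1$ is symmetric with $\tilde 0$ in place of $\tilde 1$. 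On the critical line $a=-1$ the leading $\beta_2$-term collapses to $-\beta_2\,\phi(f)$, where $\phi(f):=t(K_2,f)-t(H_2,f)\ge 0$; forcing $F_{\beta_2}(\tilde f^*)\ge\max(F_{\beta_2}(\tilde 0),F_{\beta_2}(\tilde 1))$ yields $\phi(\tilde f^*)=O(1/\beta_2)$, so by compactness of $(\widetilde{\mathcal W},\delta_\square)$ and continuity of $\phi$ in cut distance, every subsequential limit of a sequence of maximizers satisfies $\phi=0$. The residual lower-order term $b\,t(K_2,\cdot)$ then selects the correct element(s) of $\tilde U$ via the sign of $b$, giving $\tilde 1$ when $b>0$, $\tilde 0$ when $b<0$, and both when $b=0$.

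The main obstacle is the structural lemma needed to close the $a=-1$ analysis: one must show that $\phi(f)=0$ forces $f$ to be almost everywhere identically $0$ or identically $1$. Superficially, any symmetric $\{0,1\}$-valued graphon such as $\chi_{S\times S}$ for nontrivial $S\subset[0,1]$ looks like a candidate limit, but these actually fail $\phi=0$ (e.g. for $H_2=K_3$ and $|S|=1/2$, a direct computation gives $\phi(f)=1/8$). I would prove the lemma from the rewriting
\[
\phi(f)=\int f(x_1,x_2)\Bigl(1-\!\!\prod_{\{i,j\}\in E(H_2)\setminus\{1,2\}}\!\!f(x_i,x_j)\Bigr)dx_1\cdots dx_{|V(H_2)|},
\]
whose equality case forces that for a.e. $x$ with positive degree $d(x):=\int f(x,y)\,dy>0$, the row $f(x,\cdot)$ equals $1$ almost everywhere; the symmetry of $f$ then precludes the possibility that both $\{d>0\}$ and its complement have positive measure, yielding the dichotomy $f\equiv 0$ or $f\equiv 1$. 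With this lemma in place, the three sub-cases of $a=-1$ are disambiguated by the direct comparison of $b\,t(K_2,\tilde 0)=0$ against $b\,t(K_2,\tilde 1)=b$, completing the identification of $\tilde U$.
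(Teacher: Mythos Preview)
Your approach is correct and genuinely different from the paper's. The paper's proof is very short because it invokes the Chatterjee--Diaconis result that for $\beta_2>0$ every maximizer of \eqref{setmax} is a constant graphon, which collapses the problem instantly to the one-dimensional optimization \eqref{lmax}; analyzing $\beta_2(au+u^p)+bu-I(u)$ as $\beta_2\to\infty$ is then a calculus exercise on $[0,1]$, and the continuity of $I$ on that interval makes the tie-breaking by $b$ at $a=-1$ immediate. You bypass this reduction entirely and work directly in $\widetilde{\mathcal{W}}$: the cases $a\ne -1$ follow from the elementary bound $t(H_2,f)\le t(K_2,f)$ combined with $\delta_\square(\tilde f,\tilde 1)\le 1-t(K_2,f)$ (and its analogue at $\tilde 0$), while $a=-1$ is handled by your structural lemma identifying $\{\tilde f:t(K_2,f)=t(H_2,f)\}=\{\tilde 0,\tilde 1\}$. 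The paper's route is shorter because it outsources the hard step to \cite{CD}; yours is self-contained and in particular does not rely on the nontrivial fact that attractive maximizers must be constant. One detail you should make explicit when you write this out: once subsequential limits of maximizers are known to lie in $\{\tilde 0,\tilde 1\}$, ruling out the wrong endpoint for $b\ne 0$ requires controlling the entropy term, and $-\iint I(f)$ is \emph{not} cut-continuous. The fix is Jensen's inequality for the concave function $-I$: if $\tilde f^*_i\to\tilde 0$ then $t(K_2,f^*_i)\to 0$ and $0\le -\iint I(f^*_i)\le -I\bigl(t(K_2,f^*_i)\bigr)\to 0$, so $b\,t(K_2,f^*_i)-\iint I(f^*_i)\to 0$, contradicting $F_{\beta_2}(\tilde f^*_i)\ge F_{\beta_2}(\tilde 1)=b$ when $b>0$; the case $b<0$ is symmetric.
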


    When $a=-1$ and $b=0$, the limit points of the solution set of the variational problem
    \eqref{lmax} consist of two radically different graphons, one specifying
    an asymptotic edge density of $1$ and the other
    of $0$. This intriguing behavior was captured in \cite{RY},
    where it was shown that there is a continuous curve that
    asymptotically approaches the line $\beta_1=-\beta_2$, across
    which the graph transitions from being very sparse to very dense. Unfortunately, the variational technique used in
the proof of the theorem does not
    seem to yield a way of deciding whether only one or both solutions can actually be
    realized. As we will see next, a similar issue arises
    when analyzing the asymptotic extremal behavior of the edge-triangle model
    along critical directions (see Theorem \ref{main}).
    In Section \ref{ale} we describe a different method of analysis that will
    allow us to resolve this rather subtle ambiguity within the edge-triangle
    model and reveal an asymptotic phase transition
    phenomenon. In particular, Theorem \ref{thm:main.3} there can be easily adapted
    to provide an analogous resolution of the case $a=-1$ and
    $b=0$ in Theorem \ref{thm.at}.

We remark that, using same arguments, it is also possible to
handle the case in which $\beta_2$ is fixed and $\beta_1$ diverges
along horizontal lines. Then, we obtain the intuitively clear
result that, in the large $n$ limit, a typical random graph drawn
from this model becomes complete if $\beta_1 \rightarrow \infty$,
and empty if $\beta_1 \rightarrow -\infty$. We omit the easy
proof.

\begin{figure}
\centering
\includegraphics[width=2.5in]{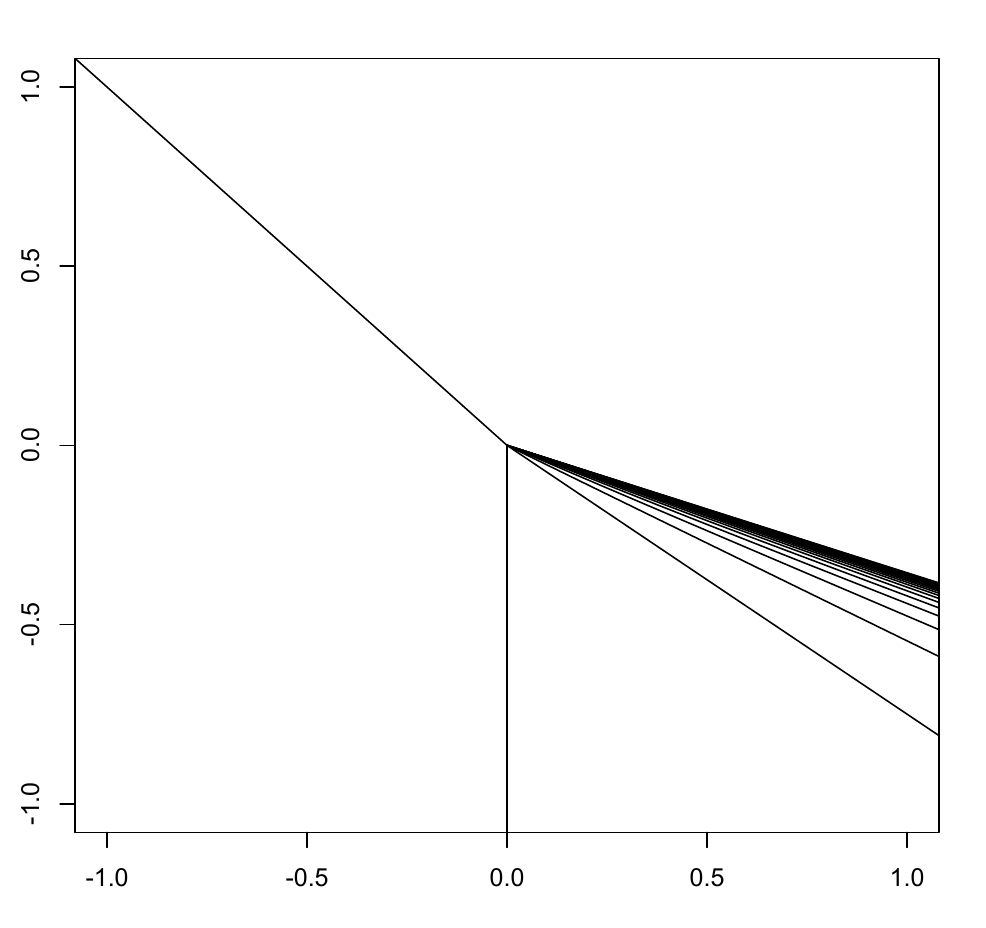}
\caption{Cones, i.e., rays emanating from the origin, generated by
the critical
    directions $o_{-1}$, $o_0$ and $o_k$, for $k=1,\ldots, 40$.
 The plot also provides an accurate depiction of the locally finite polyhedral
 complex comprised  by the normal cones to the faces of the set $P$ defined
 in \eqref{eq:P}.} \label{dir}
\end{figure}

The next two sections deal with the more challenging analysis of
the asymptotic behavior of ``repulsive'' $2$-parameter exponential
models as $\beta_2 \rightarrow -\infty$. As mentioned earlier, the asymptotic properties of such models are largely
unknown in this region.

\subsection{Asymptotic behavior of repulsive $2$-parameter exponential random graph models along vertical lines}
\label{vertical} The purpose of this section is to give an
alternate proof of Theorem 7.1 in \cite{CD} that uses
classic results in extremal graph theory. In addition, this
general result covers the asymptotic extremal behavior of the
edge-triangle model along the vertical critical direction.

Recall that $\beta_1$ is fixed and we are interested in  the
asymptotics of $\tilde{F}^*(\beta_2)$ and $\psi_\infty(\beta_2)$
as $\beta_2 \rightarrow -\infty$. We point out here that the limit
process in $\beta_2$ may also be interpreted by taking
$\beta_1=a\beta_2+b$ with $a=0$ and $b$ large negative. The
importance of this latter interpretation will become clear in the
next section. Our work here is inspired by related results of
Fadnavis \cite{F} and Radin and Sadun \cite{RS} in the case of $H_2$ being
a triangle.

\begin{theorem} [Chatterjee-Diaconis]
\label{ChDi} Consider the 2-parameter exponential random graph
model \eqref{pmf}, with $H_1 = K_2$ and $H_2$ a different,
arbitrary graph. Fix $\beta_1$. Let $r=\chi(H_2)$ be the chromatic
number of $H_2$. Let
$p=e^{2\beta_1}/(1+e^{2\beta_1})$. Then
\begin{equation}
\lim_{\beta_2 \rightarrow -\infty} \sup_{\tilde{f}\in
\tilde{F}^*(\beta_2)} \delta_{\square} (\tilde{f}, \tilde{U})=0,
\end{equation}
where the set $U \subset \mathcal{W}$ is given by $U=\{pf^{K_{r-1}}\}$ (see (\ref{eq:turan-graphon})).
\end{theorem}

As explained in \cite{CD}, the above result can be interpreted as
follows: if $\beta_2$ is negative and large in magnitude and $n$
is big, then a typical graph $G_n$ drawn from the 2-parameter
exponential model \eqref{pmf} looks roughly like a complete
$(\chi(H_2)-1)$-equipartite graph with $1-p$ fraction of edges
randomly deleted, where $p=e^{2\beta_1}/(1+e^{2\beta_1})$.

\subsection{Asymptotic quantization of edge-triangle model along general lines}
\label{jump} In this section we conduct a thorough analysis of the
asymptotic behavior of the edge-triangle model as $\beta_2
\rightarrow -\infty$. As usual, we take $\beta_1=a\beta_2+b$,
where $a$ and $b$ are fixed constants. The $a=0$ situation is a
special case of what has been discussed in the previous section:
If $n$ is large, then a typical graph $G_n$ drawn from the
edge-triangle model looks roughly like a complete bipartite graph
with $1/(1+e^{2b})$ fraction of edges randomly deleted. It is not
too difficult to establish that if $a>0$, then independent of $b$,
a typical graph $G_n$ becomes empty in the large $n$ limit.
Intuitively, this should be clear: $\beta_1$ and $\beta_2$ both
large and negative entail that $G_n$ would have minimal edge and
triangle densities. However, the case $a<0$ leads to an array of
non-trivial and intriguing extremal behaviors for the
edge-triangle model, and they are described in our next result. We
emphasize that our analysis relies on the
explicit characterization by Razborov \cite{Razborov} of the lower
boundary of the set $R$ of (the closure of) all edge and triangle
density homomorphisms (see (\ref{Ra})) and on the fact that the
extreme points of $P$ are the points $\{v_k, k=0,1,\ldots\}$,
given in \eqref{eq:vk}\footnote{
In fact, we do not actually need the
exact expressions of the lower boundary of homomorphism densities
(the Razborov curve) to derive our results -- all we need is its
strict concavity. According to Bollob\'{a}s \cite{B:76, Bo}, the
vertices of the convex hull $P_n$ for $K_2$ and $K_n$, not just
$K_2$ and $K_3$ as in the edge-triangle case, are given by the
limits of complete $k$-equipartite graphs. More general
conjectures of the limiting object $P$ may be found in
\cite{EN:11}.
}.
     Recall that these points correspond to the
density homomorphisms of the Tur\'{a}n graphons $f^{K_{k+1}}$,
$k=0,1,\ldots$, as shown in \eqref{eq:turan-graphon}.

\begin{theorem}
\label{main} Consider the edge-triangle exponential random graph
model (\ref{eq:edge-triangle}). Let $\beta_1=a\beta_2+b$ with
$a<0$ and, for $k\geq 0$, let $a_k=-\frac{k(3k+5)}{(k+1)(k+2)}$.
Then,
\begin{equation}
    \lim_{\beta_2 \rightarrow -\infty} \sup_{\tilde{f} \in \tilde{F}^*(\beta_2)}
    \delta_{\square}(\tilde{f}, \tilde{U})=0,
\end{equation}
where the set $U \subset \mathcal{W}$ is determined as follows:
\begin{itemize}
    \item $U= \{ f^{K_{k+2}} \}$ if $a_{k}>a>a_{k+1}$ or $a=a_k$ and $b>0$,
    \item $U=\{f^{K_{k+1}}, f^{K_{k+2}}\}$ if $a=a_k$ and $b=0$, and
    \item $U= \{ f^{K_{k+1}} \}$ if $a=a_k$ and $b<0$.
\end{itemize}
 \end{theorem}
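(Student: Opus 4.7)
The plan is to reduce the problem to a linear program on $P$ using the variational characterization (\ref{setmax}) and then read off the answer from the polyhedral geometry of $P$ described in Section \ref{extreme}. Writing a graphon $f\in\mathcal W$ through its image $(e,t):=(t(K_2,f),t(K_3,f))\in R$ and substituting $\beta_1=a\beta_2+b$, the objective of (\ref{setmax}) becomes
\begin{equation*}
\beta_2(ae+t)+be-\iint_{[0,1]^2} I(f)\,dx\,dy.
\end{equation*}
Because $|I(u)|\le (\log 2)/2$ for $u\in[0,1]$, the entropy term is uniformly bounded in $f$, while the leading term scales with $|\beta_2|$. Consequently, every sequence of near-maximizers $\{\tilde f_n\}$ as $\beta_2\to-\infty$ must have $(e(f_n),t(f_n))$ converging to the minimizer set $M_a\subseteq R$ of the linear functional $L_a(e,t):=ae+t$ on $R$.

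Identifying $M_a$ is the geometric heart of the argument. Minimizing $L_a$ on $R$ coincides with maximizing the linear functional $(-a,-1)\cdot(e,t)$ on the convex hull $P=\mathrm{convhull}(R)$. A direct computation with the formula (\ref{eq:outer}) for the critical directions shows that the vector $(-a,-1)$ is parallel to $o_k$ exactly when $a=a_k$, and lies strictly inside the normal cone $C_{k+1}$ at $v_{k+1}$ exactly when $a_k>a>a_{k+1}$. Hence on $P$ the minimum of $L_a$ is attained at the single vertex $v_{k+1}$ in the strict case, and along the whole facet $L_k=[v_k,v_{k+1}]$ in the boundary case $a=a_k$. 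Restricting to $R$, the strict convexity of the Razborov lower-boundary arc (\ref{Ra}) over the edge-density interval $[k/(k+1),(k+1)/(k+2)]$ guarantees that all interior points of that arc lie strictly above $L_k$; in particular they give a strictly larger value of $L_{a_k}$, so that on $R$ the minimum is still attained only at the endpoints $v_k$ and $v_{k+1}$.

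At the critical values $a=a_k$, the tie between $v_k$ and $v_{k+1}$ is broken by the subleading term $be$: because the edge coordinate of $v_{k+1}$ strictly exceeds that of $v_k$, the correction $be$ selects $v_{k+1}$ when $b>0$, selects $v_k$ when $b<0$, and leaves both optimal when $b=0$. Feeding a quantitative version of these comparisons back into the variational problem (together with the uniform entropy bound) yields that $t(f_n)$ converges in $[0,1]^2$ to the appropriate target point, or to the pair $\{v_k,v_{k+1}\}$ in the degenerate case $a=a_k,\ b=0$. This reproduces the three subcases in the statement.

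The final and most delicate step is the upgrade from convergence of the two densities $t(f_n)$ to convergence of the graphons themselves to the corresponding Tur\'an graphon in the cut metric. This requires the stability-type statement that the only graphon (up to measure-preserving transformation) realizing the extremal pair $v_j$ is $f^{K_{j+1}}$, which is essentially the graphon formulation of classical Tur\'an extremality (Theorem 16.8 in \cite{L2}) together with the equality case of Razborov's inequality. The expected main obstacle is precisely this promotion from densities to graphons: the variational argument by itself only pins down $t(f_n)$, so one must combine the optimality of $f_n$, the extremal uniqueness characterization, and the compactness of $(\widetilde{\mathcal W},\delta_\square)$ via a standard subsequence argument to conclude $\sup_{\tilde f\in\tilde F^*(\beta_2)}\delta_\square(\tilde f,\tilde U)\to 0$.
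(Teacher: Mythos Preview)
Your proposal is correct and follows essentially the same route as the paper's proof: rewrite the variational objective as $\beta_2(ae+t)+be-\iint I(f)$, use boundedness of the entropy term to force any limit graphon to minimize $ae+t$ over $R$, locate that minimizer on the lower (Razborov) boundary, and break ties at $a=a_k$ with the subleading term $be$. The only cosmetic difference is that the paper computes the derivative of $ae+t(e)$ explicitly along the Razborov curve to see that minima occur at the connection points $e_k=k/(k+1)$, whereas you phrase the same comparison via the normal-cone geometry of $P$; you are also more explicit than the paper about the final ``promotion'' step (uniqueness of the graphon realizing $v_k$), which the paper leaves implicit.
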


\begin{remark}
Notice that the case $a=a_k$ and $b=0$ corresponds to the critical
direction $o_k$, for $k=1,2,\ldots$.
\end{remark}

The above result says that,  if $\beta_1=a\beta_2+b$ with $a<0$
and $\beta_2$ large negative, then in the large $n$ limit, any
graph drawn from the edge-triangle model is indistinguishable in
the cut metric topology from a complete $(k+2)$-equipartite graph
if $a_k>a>a_{k+1}$ or $a=a_k$ and $b>0$; it looks like a complete
$(k+1)$-equipartite graph if $a=a_k$ and $b<0$; and for $a=a_k$
and $b=0$, it either behaves like a complete $(k+1)$-equipartite
graph or a complete $(k+2)$-equipartite graph.
 Lastly it becomes complete if $a\leq \lim_{k \rightarrow \infty} a_k=-3$.
 Overall, these results describe in a precise manner the array of all possible
 asymptotic extremal behaviors of the
 edge-triangle model, and link  them directly to the geometry of the natural
 parameter space as captured by the polyhedral complex of cones shown in Figure
 \ref{dir}.

When $a=a_k$ and $b=0$, for any $k=0,1,\ldots$, i.e., when the
parameters diverge along the critical direction $o_k$, Theorem
\ref{main} suffers from the same ambiguity as  Theorem
\ref{thm.at}: the limit points of the solution set of the
variational problem \eqref{setmax} as $\beta_2 \rightarrow
-\infty$ are Tur\'{a}n graphons with $k+1$ and $k+2$ classes.
Though already quite informative, this result remains somewhat
unsatisfactory because it does not indicate whether both such
graphons are actually realizable in the limit and in what manner.
As we remarked in the discussion following Theorem \ref{thm.at},
our method of proof, largely based on and inspired by the results
in \cite{CD}, does not seem to suggest a way of clarifying this
issue. In the next section we will present a completely different
asymptotic analysis yielding different types of convergence guarantees. As in the present section, two limit processes will be considered: the network size $n$ grows unbounded and the natural parameters $\beta$ diverge, with the order of limits interchanged. The two approaches in Sections \ref{mei} and \ref{ale} produce similar results except along critical directions, where the second approach has the added power of resolving the aforementioned
ambiguities.

\subsection{Probabilistic convergence of sequences of graphs from
edge-triangle model}\label{sec:cut.to.cut} The results obtained in
Sections \ref{attractive}, \ref{vertical} and \ref{jump}
characterize the extremal asymptotic behavior of the edge-triangle
model through functional convergence in the cut topology within
the space $\widetilde{\mathcal{W}}$. Our explanation of such
results though has more of a probabilistic flavor. Here we briefly
show how this interpretation is justified. By combining
\eqref{eq:32CD}, established in Theorem 3.2 of \cite{CD}, with the
theorems in Sections \ref{attractive}, \ref{vertical} and
\ref{jump}, and a standard diagonal argument, we can deduce the
existence of subsequences of the form $\{ (n_i,\beta_{2,i}
)\}_{i=1,2\ldots}$, where $n_i \rightarrow \infty$ and
$\beta_{2,i} \rightarrow \infty$ or $-\infty$ as $i \rightarrow
\infty$, such that the following holds. For fixed $a$ and $b$, let
$\{ G_i \}_{i=1,2,\ldots}$ be a sequence of random graphs drawn
from   the sequence of edge-triangle models with node sizes $\{
n_i \}$ and parameter values $\{ (a
    \beta_{2,i} + b, \beta_{2,i}) \}$. Then
\[
    \delta_{\square}(\tilde{f}^{G_i},
    \tilde{U}) \rightarrow 0\hbox{
in
  probability as } i\rightarrow
    \infty,
    \]
where the set $U \subset \mathcal{W}$, which depends on $a$ and
$b$, is described in Theorems \ref{thm.at}, \ref{ChDi} and
\ref{main}. In Section \ref{sec:var.to.cut} we will obtain a very
similar result by entirely different means.

\section{Finite $n$ analysis}
\label{ale}  In the remainder part of this paper we will present
an alternative analysis of the asymptotic behavior of the
edge-triangle model using directly the properties of the
exponential families and their closure in the finite $n$ case
instead of the variational approach of \cite{CD, RS, RY}. Though
the results in this section are seemingly similar to the ones in
Section \ref{mei}, we point out that there are marked differences.
First, while in Section \ref{mei} we study convergence in the cut
metric for the quotient space $\widetilde{\mathcal{W}}$, here we
are concerned instead with convergence in total variation of the
edge and triangle homomorphism densities. Secondly, the double
asymptotics, in $n$ and in the magnitude of $\beta$, are not the
same.
 In Section \ref{mei},
the system size $n$ goes to infinity first followed by the
divergence of the parameter $\beta_2$ to positive or negative
infinity.
In contrast, here we first let the magnitude of the natural
parameter $\beta$ diverge to infinity so as to isolate a simpler
``restricted" edge-triangle sub-model,  and then study its limiting
properties as $n$ grows. Though both approaches are in fact
asymptotic, we characterize the latter  as ``finite $n$", to
highlight the fact that we are not working with a limiting system
and because, even with finite $n$, the extremal properties already
begin to emerge. Despite these differences, the conclusions we can
derive from both types of analysis are rather similar.
Furthermore, they imply a nearly identical convergence in
probability in the cut topology (see Sections \ref{sec:cut.to.cut}
and \ref{sec:var.to.cut}).

Besides giving a rather strong form of asymptotic convergence, one
of the appeals of the finite $n$ analysis consists in its ability
to provide a more detailed categorization of the limiting behavior
of the model along critical directions using simple geometric
arguments based on the dual geometry of $P$, the convex hull of
edge-triangle homomorphism densities. Specifically, we will
demonstrate that, asymptotically, the edge-triangle model
undergoes phase transitions along critical directions, where its
homomorphism densities will converge in total variation
 to the densities of one of two Tur\'{a}n graphons, both of which are
 realizable. In addition we are able to state precise conditions on the natural
 parameters for such transitions to
 occur.

\subsection{Exponential families}
We begin by reviewing some of the standard theory of exponential
families and their closure in the context of the edge-triangle
model. We refer the readers to Barndorff-Nielsen \cite{BRN:78} and
Brown \cite{BROWN:86} for exhaustive  treatments of exponential
families, and to Csisz\'{a}r and Mat\'{u}\v{s} \cite{CS:05,
CS:08}, Geyer \cite{GEYER:09}, and Rinaldo et al. \cite{R}  for
specialized results on the closure of exponential families
directly relevant to our problem.

Recall that we are interested in the exponential family of
probability distributions on $\mathcal{G}_n$ such that, for a
given choice of the natural parameters $\beta \in \mathbb{R}^2$,
the probability of observing a network $G_n \in \mathcal{G}_n$ is
\begin{equation}
\label{add}
    \PR_{n,\beta}(G_n)=\exp \left( n^2 \left(\langle \beta, t(G_n) \rangle -
    \psi_n(\beta)
    \right) \right), \quad \beta \in \mathbb{R}^2,
\end{equation}
    where $\psi_n(\beta)$ is the normalizing constant and the
    function $t(\cdot)$ is given in \eqref{eq:tGn}.
We remark that the above model assigns the same probability to all
graphs in $\mathcal{G}_n$ that have the same image under
$t(\cdot)$. We let $S_n = \{ t(G_n), G_n \in \mathcal{G}_n \}
\subset [0,1]^2$ be the set of all possible vectors of densities
of graph homomorphisms of $K_2$ and $K_3$ over the set
$\mathcal{G}_n$ of all simple graphs on $n$ nodes (see
(\ref{eq:tGn})).
    By (\ref{add}), the family on $\mathcal{G}_n$
    will induce the exponential family of probability
distributions $\mathcal{E}_n = \{
    \PR_{n, \beta}, \beta \in \mathbb{R}^2\}$
on $S_n$, such that the probability of observing a point $x \in
S_n$ is
\begin{equation}
    \PR_{n,\beta}(x) = \exp \left(n^2 \left(\langle \beta, x \rangle -
    \psi_n(\beta)
    \right)\right) \nu_n(x), \quad \beta \in \mathbb{R}^2,
\end{equation}
where $\nu_n(x) = |\{ t^{-1}(x) \}|$ is the measure on $S_n$
induced by the counting measure on $\mathcal{G}_n$ and $t(\cdot)$.
For each $n$, the family $\mathcal{E}_n$ has finite support not
contained in
    any lower dimensional set (see Lemma \ref{lem:Pn} below) and, therefore, is
    full and regular and, in particular,
    steep. 

    We will study the limiting behavior of sequences
    of models of the form
    $\{\PR_{n, \beta+ r o}\}$, where $\beta$ and $o$ are fixed vectors  in $\mathbb{R}^2$ and
    $n$ and $r$ are parameters both tending to
    infinity.
    While it may be tempting to regard $n$ as a
    surrogate for an increasing sample size, this would in fact be incorrect. Models parametrized by different
    values of $n$ and the same $r$ cannot be embedded (for the edge-triangle
    model) in any sequence of
    consistent probability measures, since the probability distribution
    corresponding to the smaller network  cannot in general be obtained from the other by
    marginalization, for a fixed choice of $\beta$. See \cite{SR:13} for details.
    We will show that different
choices of the direction $o$ will yield different extremal
behaviors of the model and we will categorize the variety of these
behaviors as a function of $o$ and, whenever it matters, of
$\beta$. A key feature of our analysis is the direct link to the
geometric properties of the polyhedral complex $\{C_k,
k=-1,0,\ldots\}$ defined by the set $P$ (see Section
\ref{extreme}).

Overall, the results of this section are obtained with non-trivial
extensions of techniques described in the exponential families
literature. Indeed, for fixed $n$, determining  the limiting
behaviors of the family $\mathcal{E}_n$ along sequences of natural
parameters $\{\beta + r o\}_{r \rightarrow \infty}$ for each unit
norm vector $o$ and each $\beta$ is the main technical ingredient
in computing the total variation closure of $\mathcal{E}_n$.
In particular Geyer \cite{GEYER:09} refers to the directions $o$
as the ``directions of recession" of the model. The relevance of
the directions of recession to the asymptotic behavior of
exponential random graphs is now well known, as demonstrated in
the work of Handcock \cite{H:03} and Rinaldo et al. \cite{R}.

\subsection{Finite $n$ geometry}
As we saw in Section \ref{mei}, the critical directions are
determined by the limiting object $P$. For finite $n$, an
analogous role is played by the convex support of $\mathcal{E}_n$,
which is given by the polytope
\[
    P_n = \mathrm{convhull}(S_n)\subset [0,1]^2.
\]
    The interior of $P_n$ is equal to all possible expected values of the sufficient
    statistics $\{\mathbb{E}_{n,\beta} \left(t(G_n)\right), \beta \in
    \mathbb{R}^2 \}$, where $\mathbb{E}_{n,\beta}$ is the expectation
    operator with respect to the measure $\mathbb{P}_{n,\beta}$.   Thus,
    it  provides a different parametrization
    of $\mathcal{E}_n$,
    known as the mean-value parametrization (see, e.g., \cite{BRN:78,BROWN:86}).
    Unlike the natural  parametrization, the mean value parametrization has
    explicit geometric properties that turn out to be particularly convenient in
    order to describe the closure of $\mathcal{E}_n$, and, ultimately, the
    asymptotics of the model.

    The next lemma characterizes the geometric properties of $P_n$. The most
    significant of these properties is that $\lim_n P_n = P$, an easy result that
    turns out to be the key for our analysis.
 Recall that we denote by $T(n,r)$ any Tur\'{a}n graph on $n$
nodes with $r$ classes. For $k=0,1,\ldots,n-1$, set $v_{k,n} =
t(T(n, k+1))$ and let $L_{k,n}$ denote the line segment joining
$v_{k,n}$ and $v_{k+1,n}$.

\begin{lemma}\label{lem:Pn}
    \begin{enumerate}
    \item The polytope $P_n$ is spanned by the points $\{ v_{k,n}, k=0,1,\ldots,
    \lceil n/2 \rceil -1\}$ and $v_{n-1,n}$. 
\item $\lim_n P_n = P$. \item If $n$ is a multiple of $(k +
1)(k+2)$, then $v_{k,n} = v_k$ and $v_{k+1,n} = v_{k+1}$. In
addition, for all such $n$, $L_{k,n} \cap S_n = \{v_k , v_{k+1}
\}$.
    \end{enumerate}
\end{lemma}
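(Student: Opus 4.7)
The plan is to handle the three items in order, since the geometric description in part 1 feeds both part 2 (convergence) and part 3 (uniqueness).

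For part 1 I would split $\partial P_n$ into an upper and a lower piece. The upper piece is the chord from $v_{0,n}=(0,0)$ to $v_{n-1,n}$, which follows from the elementary double count $3T(G_n)\le (n-2)E(G_n)$ (each edge lies in at most $n-2$ triangles and each triangle has three edges); in density coordinates this is $t(K_3,G_n)\le \tfrac{n-2}{n}t(K_2,G_n)$, with equality only for $G_n=K_n$. For the lower piece I would invoke the classical finite-$n$ extremal result identifying Tur\'an graphs as minimizers of the triangle count at Tur\'an edge densities, so that the lower polyline runs through $v_{0,n},v_{1,n},\ldots,v_{\lceil n/2\rceil-1,n}$. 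The cutoff $\lceil n/2\rceil-1$ is then explained by a short direct computation: for $k\ge \lceil n/2\rceil$ the Tur\'an graph $T(n,k+1)$ is the complement of a matching of size $m=n-k-1$, so its edge and triangle counts $\binom{n}{2}-m$ and $\binom{n}{3}-m(n-2)$ are both linear in $m$; hence the points $v_{k,n}$ in this range are collinear and only the extremes $v_{\lceil n/2\rceil-1,n}$ and $v_{n-1,n}$ are vertices of $P_n$.

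For part 2 I would establish Hausdorff convergence. The inclusion $P_n\subseteq P$ is immediate from $S_n\subseteq R$ via the graphon representation \eqref{f} together with $P=\mathrm{convhull}(R)$. For the reverse approximation, direct evaluation of the edge and triangle densities of nearly balanced Tur\'an graphs gives $v_{k,n}\to v_k$ for each fixed $k$ and $v_{n-1,n}\to (1,1)$ as $n\to\infty$; by Lemma \ref{lem:P} the extreme points of $P$ are exactly $\{v_k:k\ge 0\}\cup\{(1,1)\}$, so any point of $P$ is a finite convex combination of finitely many of these. Once $n$ is large enough that $\lceil n/2\rceil-1$ dominates the indices appearing in the combination, each extreme point in the combination is approximated by a vertex of $P_n$, yielding $P_n\to P$ in the Hausdorff metric.

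For part 3, when $(k+1)(k+2)\mid n$ both $T(n,k+1)$ and $T(n,k+2)$ have perfectly balanced classes; substituting the class sizes $n/(k+1)$ and $n/(k+2)$ into the edge and triangle counts of complete multipartite graphs reproduces \eqref{eq:vk}, so $v_{k,n}=v_k$ and $v_{k+1,n}=v_{k+1}$. For the identity $L_{k,n}\cap S_n=\{v_k,v_{k+1}\}$, the divisibility condition together with part 1 makes $L_{k,n}$ a facet of $P_n$; if some $G_n$ had $t(G_n)\in L_{k,n}\setminus\{v_k,v_{k+1}\}$, then the minimum triangle count over graphs with $E(G_n)$ edges would coincide with the chord value at an interior edge count, which contradicts the strict convexity of the minimum-triangle curve on $[E(T(n,k+1)),E(T(n,k+2))]$ together with the uniqueness of Tur\'an graphs as extremizers at the two endpoints. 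The main obstacle is exactly this finite-$n$ extremal graph theory input: both the lower-boundary identification in part 1 and the strict-convexity/uniqueness needed in part 3 rely on the classical result that Tur\'an graphs are the unique minimizers of the triangle count at Tur\'an edge densities and that the minimum-triangle curve is strictly convex between consecutive such densities. Once this is granted, the remaining arguments reduce to routine geometric reasoning and direct calculation.
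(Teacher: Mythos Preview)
Your outline for parts 1 and 2 matches the paper's proof closely. For part 1 the paper simply cites Bollob\'as \cite{B:76} for the fact that every linear functional $x\mapsto\langle x,(1,\gamma)\rangle$ is maximized over $P_n$ at some $v_{k,n}$ (and conversely), and then gives exactly your collinearity computation for $k\ge\lceil n/2\rceil$; your explicit double count $3T(G_n)\le (n-2)E(G_n)$ for the upper chord is a nice elementary addition that the paper does not spell out. Part 2 is argued identically in both.

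For part 3 your route diverges from the paper's, and the paper's is simpler. You propose to use a \emph{finite-$n$} statement---strict concavity of the minimum-triangle curve on the interval between consecutive Tur\'an edge counts, plus uniqueness at the endpoints---and you correctly flag this as the main obstacle. The paper sidesteps this entirely: since every $G_n$ defines a graphon via \eqref{f}, one has $S_n\subset R$, so it suffices to show $R\cap L_k=\{v_k,v_{k+1}\}$. That follows immediately from Razborov's graphon-level bound \eqref{Ra} together with the strict concavity of the Razborov curve on $[k/(k+1),(k+1)/(k+2)]$, which forces the curve to lie strictly above the chord $L_k$ in the interior. No finite-$n$ extremal input beyond the elementary verification $v_{k,n}=v_k$, $v_{k+1,n}=v_{k+1}$ is needed. (A terminological caution: here and in your write-up the relevant property is strict \emph{concavity} of $t_{\min}(e)$---the curve lies above its chords---not convexity; the paper's body text contains the same slip, corrected in its proof.)
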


    Part 2. of Lemma \ref{lem:Pn} implies that for each $k$, $\lim_n v_{k,n} = v_k$,
    a fact that will be used in Theorem \ref{thm:main.2} to describe the
    asymptotics of the model along generic (i.e., non-critical directions). This conclusion still holds if the
polytopes $P_n$ are the convex hulls of isomorphism, not
homomorphism, densities. In this case, however, we have that $P_n
\supset P$ for each $n$ (see \cite{EN:11}). The seemingly
inconsequential fact stated in part 3. is instead of technical
significance for our analysis of the phase transitions along
critical directions, as will be described in Theorem
\ref{thm:main}. We take note that when $n$ is not a multiple of
$(k+1)(k+2)$, part 3. does not hold in general.

    \subsection{Asymptotics along generic directions}

    Our first result, which gives similar finding as in Section \ref{jump} shows that, for
     large $n$, if the distribution is parametrized by a vector
    with very large norm, then almost all of its mass will concentrate on the
    isomorphic class of a
    Tur\'{a}n graph (possibly the empty or the complete graph).  Which Tur\'{a}n graph it concentrates on
    will essentially depend on the ``direction"
    of the parameter vector with respect to the origin. Furthermore, there is an array of
    extremal directions that will give the same isomorphic class of a Tur\'{a}n
    graph.

\begin{theorem}\label{thm:main.2}
    Let $o$ and $\beta$ be vectors in $\mathbb{R}^2$ such that $o\neq o_k$ for
    $k=-1,0,1,\ldots$ and let $k$ be such  that $o \in C_k^{\circ}$. For any $0<\epsilon<1$
    arbitrarily small, there
exists an $n_0=n_0(\beta,\epsilon,o)>0$   such that the following
holds: for every $n>n_0$, there exists an
$r_0=r_0(\beta,\epsilon,o,n)>0$ such that for all $r>r_0$,
\[
    \PR_{n,\beta +r o}(v_{k,n})>1-\epsilon.
    \]
\end{theorem}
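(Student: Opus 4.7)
The proof combines a geometric argument about the local convergence of the polytopes $P_n$ to $P$ near $v_k$ with a direct Laplace-type bound on the partition function.

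The main step is geometric. Since $o \in C_k^{\circ} = \mathrm{int}(\mathrm{cone}(o_{k-1}, o_k))$, the linear functional $\langle o, \cdot \rangle$ attains its maximum over $P$ uniquely at $v_k$. By Lemma \ref{lem:Pn}, once $n$ exceeds a threshold depending only on $k$, the points $v_{k-1,n}$ (interpreted as $v_{n-1,n}$ when $k=0$), $v_{k,n}$ and $v_{k+1,n}$ are distinct vertices of $P_n$, and each converges to the corresponding vertex of $P$. Consequently the outer normals of the two facets of $P_n$ meeting at $v_{k,n}$ converge to $o_{k-1}$ and $o_k$, respectively, so for all $n$ sufficiently large $o$ lies strictly inside the normal cone of $P_n$ at $v_{k,n}$. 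Hence $v_{k,n}$ is the unique maximizer of $\langle o, \cdot \rangle$ over $P_n$, and a fortiori over the finite set $S_n$, so
\[
\delta_n := \min_{x \in S_n \setminus \{v_{k,n}\}} \langle o, v_{k,n} - x \rangle > 0.
\]

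The remainder is a straightforward estimate. Dividing the partition function by the term corresponding to $v_{k,n}$,
\[
\PR_{n, \beta + ro}(v_{k,n})^{-1} = 1 + \sum_{x \in S_n \setminus \{v_{k,n}\}} \frac{\nu_n(x)}{\nu_n(v_{k,n})}\, \exp\!\bigl(n^2 \langle \beta, x - v_{k,n} \rangle\bigr)\, \exp\!\bigl(-n^2 r \langle o, v_{k,n} - x\rangle\bigr).
\]
Since at least one Tur\'{a}n graph maps to $v_{k,n}$ one has $\nu_n(v_{k,n}) \geq 1$, and because $n$ and $\beta$ are fixed, the cardinality $|S_n|$, the multiplicities $\nu_n(x)$, and the distances $\|x - v_{k,n}\|$ are all bounded, so each summand without the final exponential is bounded by some finite $M_n(\beta) > 0$. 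Using the previous step to bound the final exponential by $\exp(-n^2 r \delta_n)$, I obtain
\[
\PR_{n, \beta + ro}(v_{k,n})^{-1} \leq 1 + |S_n|\, M_n(\beta)\, \exp(-n^2 r \delta_n).
\]

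Given $\epsilon \in (0,1)$, choose $n_0 = n_0(\beta, \epsilon, o)$ large enough so that the geometric conclusion holds for every $n > n_0$, and then, for each such $n$, choose $r_0 = r_0(\beta, \epsilon, o, n)$ so that $|S_n|\, M_n(\beta)\, \exp(-n^2 r \delta_n) \leq \epsilon/(1-\epsilon)$ for every $r > r_0$; this yields $\PR_{n, \beta + ro}(v_{k,n}) > 1 - \epsilon$. The main obstacle is the polytope-geometry argument: one has to check that the local combinatorial structure of $P_n$ near $v_{k,n}$ stabilizes to that of $P$ near $v_k$, so that $o$ eventually falls in the interior of the correct normal cone of $P_n$ rather than the normal cone at some neighboring vertex. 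Once this is established, the exponential bound above is routine.
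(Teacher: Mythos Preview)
Your proof is correct and follows essentially the same approach as the paper. The paper packages the geometric step (convergence of the normal cone at $v_{k,n}$ to $C_k$, via Lemma~\ref{lem:Pn}) and the probabilistic step (decay of mass off $v_{k,n}$ as $r\to\infty$) into an auxiliary result (Corollary~\ref{cor:o}), whose proof sketch is the same normal-cone convergence argument you give, and then invokes the dominated-convergence calculation from Proposition~\ref{prop:closure} in place of your explicit Laplace-type bound; the two arguments are equivalent.
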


\begin{remark}
If in the theorem above we consider only values of $n$ that are
multiples of $(k+1)(k+2)$ then, by Lemma \ref{lem:Pn}, $v_{k,n} =
v_k$ for all such $n$, which implies convergence in total
variation to the point mass at $v_k$.
\end{remark}

 The theorem  shows that {\it any} choice of $o \in
C_k^{\circ}$ will yield the same asymptotic (in $n$ and $r$)
behavior, captured by the Tur\'{a}n graphon with $k+1$ classes. This
can be further strengthened to show that the convergence is
uniform in $o$ over compact subsets of $C_k^{\circ}$. See \cite{R}
for details. Interestingly, the initial value of $\beta$ does not
play any role in determining the asymptotics of $\PR_{n, \beta + r
o}$, which instead depends solely on which cone $C_k$ contains in
its interior  the direction $o$. Altogether, Theorem
\ref{thm:main.2} can be interpreted as follows: the interiors of
the cones of the infinite polyhedral complex $\{ C_k,
    k=-1,0,1,\ldots\}$ represent equivalence classes of ``extremal directions" of
    the model, whereby directions in the same class will parametrize, for
    large $n$ and $r$, the same degenerate distribution on some Tur\'{a}n graph.

%

    \subsection{Asymptotics along critical directions}
Theorem  \ref{thm:main.2} provides a complete categorization of
the asymptotics (in $n$ and $r$) of probability distributions of
the form $P_{n, \beta + r o}$ for any generic direction $o$ other
than the critical directions $\{ o_k, k=-1,0,1,\ldots\}$. We now consider the more delicate cases in which $o = o_k$ for some
$k$. Recall that, according to Theorem \ref{main}, in these instances the
typical graph drawn from the model will converge (as $n$ and $r$
grow and in the cut metric) to a large Tur\'{a}n graph, whose
number of classes is not entirely specified.

Our first result characterizes such behavior along subsequences
of the form $n = j (k+1)(k+2)$, for $j=1,2,\ldots$ and $k$ a
positive integer. Interestingly, and in contrast with Theorem
\ref{thm:main.2}, the limiting behavior along any critical
direction $o_k$ depends on $\beta$ in a discontinuous manner.
Before stating the result we will need to introduce some
additional notation. Let $l_{k} \in \mathbb{R}^2$ be the unit norm
vector spanning the one-dimensional linear subspace
$\mathcal{L}_{k}$ given by the line through the origin parallel to
$L_{k}$, where $k>0$, so $l_k$ is just a rescaling of the vector
\[
\left(1,\frac{k(3k+5)}{(k+1)(k+2)} \right).
    \]
 Next let  $H_k = \{ x
    \in \mathbb{R}^2 \colon \langle x, l_k \rangle = 0\} =
    \mathcal{L}_k^\bot$ be the line through the origin defining the
    linear subspace orthogonal to $\mathcal{L}_k$ and let
\begin{equation}
        H^+_k = \{ x \in \mathbb{R}^2 \colon \langle x, l_k \rangle >0 \}
        \quad \text{and} \quad      H^-_k = \{ x \in \mathbb{R}^2 \colon
    \langle x, l_k \rangle <0 \}
\end{equation}
be the positive and negative half-spaces cut out by $H_k$,
respectively.
    Notice that the linear subspace $\mathcal{L}^\bot_{k}$ is spanned by
    the vector $ o_k$ defined in \eqref{eq:outer}.

We will make the simplifying assumption that $n$ is a multiple of
$(k+1)(k+2)$. This implies, in particular, that $v_{k,n}= v_k$ and
$v_{k+1,n} = v_{k+1}$ are both vertices of $P_n$ and that the line
segment $L_{k,n} = L_k$ is a facet of $P_n$ whose normal cone is
spanned by the point $o_k$.

    \begin{theorem}\label{thm:main}
Let $k$ be a positive integer, $\beta \in \mathbb{R}^2$ and
$0<\epsilon<1$ be arbitrarily small. Then there exists an
$n_0=n_0(\beta,\epsilon,k)>0$ such that the following holds: for
every $n>n_0$ and a multiple of $(k+1)(k+2)$ there exists an $r_0=
r_0(\beta,\epsilon,k,n)>0$ such that for all $r>r_0$,
\begin{itemize}
    \item  if $\beta \in H^+_k$ or $\beta \in H_k$ then $\PR_{n,\beta+r
        o_k}(v_{k+1})>1-\epsilon$,
\item if $\beta \in H^-_k$ then $\PR_{n,\beta+r
o_k}(v_{k})>1-\epsilon$.

\end{itemize}
\end{theorem}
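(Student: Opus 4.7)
The plan is a two-phase argument. First, by sending $r \to \infty$ at fixed $n$, the mass of $\PR_{n,\beta+r o_k}$ is pushed onto the two vertices $v_k$ and $v_{k+1}$ of the facet $L_k$ of $P_n$. Second, by sending $n \to \infty$ through multiples of $(k+1)(k+2)$, the sign of $\langle \beta, v_{k+1}-v_k\rangle$ (together with an entropic tie-breaker when this inner product vanishes) determines which of the two vertices carries essentially all the mass.

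For the first phase, Lemma~\ref{lem:Pn}(3) ensures that when $n$ is a multiple of $(k+1)(k+2)$ the segment $L_{k,n}=L_k$ is a facet of $P_n$ exposed by $o_k$, and $L_k \cap S_n = \{v_k, v_{k+1}\}$. Every other $x \in S_n$ therefore satisfies $\langle o_k, v_k-x\rangle \geq \delta_n$ for a strictly positive gap $\delta_n$ (finiteness of $S_n$). The crude bound
\[
\frac{\PR_{n,\beta+r o_k}(x)}{\PR_{n,\beta+r o_k}(v_k)} \leq \exp\bigl(n^2 \sqrt{2}\,\lVert\beta\rVert - n^2 r\delta_n\bigr)\cdot\frac{\nu_n(x)}{\nu_n(v_k)},
\]
combined with $\nu_n(x)\leq 2^{\binom{n}{2}}$ and $|S_n|\leq 2^{\binom{n}{2}}$, yields $\PR_{n,\beta+r o_k}\bigl(S_n\setminus\{v_k,v_{k+1}\}\bigr)<\epsilon/2$ for $r\geq r_0(\beta,\epsilon,k,n)$ sufficiently large.

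For the second phase, since $\langle o_k, v_{k+1}-v_k\rangle=0$ the $r$-dependence cancels in the ratio
\[
\frac{\PR_{n,\beta+r o_k}(v_{k+1})}{\PR_{n,\beta+r o_k}(v_k)} = \exp\bigl(n^2 \langle \beta, v_{k+1}-v_k\rangle\bigr)\cdot\frac{\nu_n(v_{k+1})}{\nu_n(v_k)}.
\]
A direct computation from \eqref{eq:vk} gives $v_{k+1}-v_k = \frac{1}{(k+1)(k+2)}\bigl(1,\tfrac{k(3k+5)}{(k+1)(k+2)}\bigr)$, a positive multiple of $l_k$, so the sign of $\langle \beta, v_{k+1}-v_k\rangle$ coincides with that of $\langle \beta, l_k\rangle$. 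For the counting factor, because $v_k$ and $v_{k+1}$ are extreme points of $P_n$ and the Razborov bound \eqref{Ra} is saturated uniquely (up to isomorphism) by the balanced Tur\'an graphs $T(n,k+1)$ and $T(n,k+2)$ when $n$ is a multiple of $(k+1)(k+2)$, one has $\nu_n(v_k)=n!/\bigl(((n/(k+1))!)^{k+1}(k+1)!\bigr)$ and $\nu_n(v_{k+1})=n!/\bigl(((n/(k+2))!)^{k+2}(k+2)!\bigr)$, and Stirling's formula yields $\nu_n(v_{k+1})/\nu_n(v_k) \sim C_k(n)\bigl(\tfrac{k+2}{k+1}\bigr)^n$ with $C_k(n)$ polynomial in $n$.

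Combining the two phases: when $\beta\in H_k^+$, the factor $e^{n^2\langle \beta, v_{k+1}-v_k\rangle}$ with positive exponent overwhelms every other term and sends $\PR(v_{k+1})/\PR(v_k) \to \infty$; when $\beta\in H_k^-$, this factor has negative exponent and crushes the exponential-in-$n$ counting ratio, sending the same ratio to $0$; when $\beta\in H_k$ the $n^2$-exponential equals $1$ and the counting ratio $((k+2)/(k+1))^n$ alone drives the ratio to infinity, favoring $v_{k+1}$. Choosing $n_0=n_0(\beta,\epsilon,k)$ large enough so that the ratio exceeds an appropriate threshold, and then $r_0=r_0(\beta,\epsilon,k,n)$ from the first phase, yields the theorem. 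The hard part is the critical case $\beta\in H_k$: the Gibbs tilt vanishes and the entire decision is entropic, so one must lean on the stability/uniqueness of balanced Tur\'an graphs as Razborov extremizers along the subsequence of $n$ that are multiples of $(k+1)(k+2)$; it is precisely this entropic asymmetry that produces the asymptotic discontinuity along the critical direction predicted by the theorem.
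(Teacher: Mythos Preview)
Your proposal is correct and follows essentially the same two-phase strategy as the paper: first push the mass onto $\{v_k,v_{k+1}\}$ by sending $r\to\infty$ (the paper packages this as convergence in total variation to the closure family $\mathcal{E}_{k,n}$ in Proposition~\ref{prop:closure}), then analyze the ratio $\PR(v_{k+1})/\PR(v_k)$ via the Stirling asymptotic $\nu_n(v_{k+1})/\nu_n(v_k)\asymp n^{-1/2}\bigl(\tfrac{k+2}{k+1}\bigr)^n$ (the paper's Lemma~\ref{lemma:vu.n} and Theorem~\ref{thm:nj}). One minor slip: your prefactor $C_k(n)$ is not polynomial but of order $n^{-1/2}$; this is harmless since only its subexponential growth matters against the $e^{cn^2}$ tilt and the $(\tfrac{k+2}{k+1})^n$ entropy term.
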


The previous result shows that, for large values of $r$ and $n$
(assumed to be a multiple of
    $(k+1)(k+2)$), the probability distribution $P_{n,\beta + r o_k}$
    will be
    concentrated almost entirely on either $v_k$ or $v_{k+1}$, depending on
    which side of $H_k$ the vector $\beta$ lies. In particular, the actual value
    of $\beta$ does not play any role in the asymptotics: only its position
    relative to $H_k$ matters. An interesting consequence of our result is
    the  discontinuity of the natural parametrization along the line
    $H_k$ in the limit as both $n$ and $r$ tend to infinity. This is in stark contrast to
    the limiting behavior of the same model when $n$ is infinity and $r$ tends to infinity: in
    this case the natural parametrization is a smooth (though non-minimal)
    parametrization.

    We now consider the critical directions $o_{-1}$
    and $o_{0}$ (see (\ref{eq:outer})), which are not covered by Theorem
    \ref{thm:main}.
    We will first describe the behavior of
    $\mathcal{E}_n$ along the direction of recession $o_{-1,n} :=\left(
    -1, \frac{n}{n-2}\right)$. This is the outer normal to the
    segment joining the vertices $v_{0,n} = (0,0)$ and $v_{n-1,n} = \left( 1 - \frac{1}{n}, \left(
    1 - \frac{1}{n}\right) \left( 1 - \frac{2}{n}\right) \right)$ of $P_n$,
    representing the empty and the complete graph, respectively. Notice that
    $o_{-1,n}
    \rightarrow o_{-1}$ as $n \rightarrow \infty$.
    In this case, we show that, for $n$ and $r$ large, the probability $P_{n, \beta+ r
        o_n}$, with $\beta = (\beta_1,\beta_2)$, assigns almost all of
        its mass to the empty graph when $\beta_1 + \beta_2 < 0$ and to the
        complete graph when $\beta_1 + \beta_2 > 0$, and it is uniform over
    $v_{0,n}$ and $v_{n-1,n}$ when $\beta_1 n(n-1)  +  \beta_2 (n-1)(n-2) =
    0$.


    \begin{theorem}
        \label{thm:main.3}
    Let $\beta=(\beta_1,\beta_2) \in \mathbb{R}^2$ be a fixed vector and $0<\epsilon <1$ be
    arbitrarily small. Then for every $n$ there exists an $r_0=
    r_0(\beta,\epsilon,n)$ such that, for all $r>r_0$, the total variation
    distance between $P_{n, \beta+r o_n}$ and the probability
    distribution which assigns to the points $v_{0,n}$ and $v_{n-1,n}$
    the probabilities
    \[
\frac{1}{1 + \exp\big( \beta_1 n(n-1)  +  \beta_2 (n-1)(n-2)
\big)}\
    \]
    and
    \[
\frac{ \exp\big( \beta_1 n(n-1)  +  \beta_2 (n-1)(n-2)  \big) }{ 1
+
    \exp\big( \beta_1 n(n-1)  +  \beta_2 (n-1)(n-2)  \big)
    },
        \]
        respectively, is less than $\epsilon$.
    \end{theorem}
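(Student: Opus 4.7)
The plan is to exploit the geometric fact that $o_n := o_{-1,n} = (-1, n/(n-2))$ is the outer normal to the edge of $P_n$ joining the vertex $v_{0,n} = (0,0)$ (the empty graph) and the vertex $v_{n-1,n} = ((n-1)/n, (n-1)(n-2)/n^2)$ (the complete graph $K_n$). A direct calculation yields $\langle o_n, v_{0,n}\rangle = \langle o_n, v_{n-1,n}\rangle = 0$; since this segment is indeed an edge of $P_n$ by Lemma \ref{lem:Pn}(1), it follows that $\langle o_n, x\rangle \le 0$ for every $x \in P_n$, with equality if and only if $x$ lies on this chord.

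The second step is to show that the only elements of $S_n$ on this chord are its two endpoints $v_{0,n}$ and $v_{n-1,n}$, so that $\nu_n(v_{0,n}) = \nu_n(v_{n-1,n}) = 1$ and every other $x \in S_n$ satisfies $\langle o_n, x\rangle < 0$. A graph $G_n$ has $t(G_n)$ on this chord if and only if its numbers of edges and triangles satisfy $T(G_n) = E(G_n)(n-2)/3$. Using the double-counting identity $3 T(G_n) = \sum_e \Delta(e)$, where the sum runs over the edges $e$ of $G_n$ and $\Delta(e)$ denotes the number of common neighbors of the endpoints of $e$, this reduces to $\sum_e \Delta(e) = E(G_n)(n-2)$. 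Since $\Delta(e) \le n-2$ for every edge, equality forces either $E(G_n) = 0$ (the empty graph) or $\Delta(e) = n-2$ for every edge, which forces $G_n = K_n$.

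Given these two ingredients the conclusion is a routine finite exponential family calculation. Writing
\begin{equation*}
\mathbb{P}_{n,\beta+r o_n}(x) \propto \exp\bigl(n^2 r \langle o_n, x\rangle\bigr)\exp\bigl(n^2 \langle \beta, x\rangle\bigr)\nu_n(x),
\end{equation*}
the factor $\exp(n^2 r \langle o_n, x\rangle)$ equals $1$ when $x \in \{v_{0,n}, v_{n-1,n}\}$, and is of the form $\exp(-c_x r)$ with $c_x>0$ for any other $x \in S_n$. Since $n^2\langle \beta, v_{0,n}\rangle = 0$ and $n^2\langle \beta, v_{n-1,n}\rangle = \beta_1 n(n-1) + \beta_2 (n-1)(n-2)$, after normalization the weights at the two extremal points converge as $r\to\infty$ to the logistic values stated in the theorem. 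Since $S_n$ is finite, one chooses $r_0=r_0(\beta,\epsilon,n)$ large enough that the aggregate mass of $S_n \setminus \{v_{0,n}, v_{n-1,n}\}$ is below $\epsilon/2$; the claimed total variation bound follows.

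The main (and essentially only) obstacle is the combinatorial rigidity statement in the second step: verifying that no other graph on $n$ vertices has edge and triangle densities on this diagonal chord. This replaces the Razborov-curve analysis of Section \ref{jump} with a very simple finite-$n$ argument, and is precisely what prevents the probability mass from leaking away from the two extremes. Everything else is standard manipulation of the exponential family, made tractable by the fact that at finite $n$ we have a bona fide finite-dimensional exponential family with only $r$ tending to infinity.
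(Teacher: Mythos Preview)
Your proof is correct and follows the same route as the paper, which disposes of the theorem in a single sentence by invoking Proposition~\ref{prop:minimal} (that is, the closure-of-exponential-family argument along the facet normal $o_{-1,n}$) together with ``simple algebra.'' The one place you add genuine content is your second step: the paper never explicitly verifies that the diagonal facet meets $S_n$ only at its endpoints---the analogue of Lemma~\ref{lem:Pn}(3) for $k=-1$---whereas your double-counting identity $3T(G_n)=\sum_{e}\Delta(e)\le E(G_n)(n-2)$ gives a clean self-contained proof of this fact and of $\nu_n(v_{0,n})=\nu_n(v_{n-1,n})=1$, which is exactly what is needed for the reduced exponential family to collapse to the stated two-point logistic distribution.
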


In the last result of this section we will turn to the critical
direction $o_0=\left(0, -1 \right)$, which, for every $n \geq 2$,
is the outer normal to the horizontal facet of $P_n$ joining the
points $(0,0)$ and
\[
    v_{1,n} = \left(\frac{2 \lceil n/2 \rceil ( n
- \lceil n/2 \rceil)}{n^2}, 0\right),
\]
which we denote with $L_{0,n}$. Let $\mathcal{G}_{n,0}$ denote the
subset of $\mathcal{G}_n$ consisting of triangle free graphs. For
each $n$, consider the exponential family $\{\QR_{n,\beta_1} ,
\beta_1 \in \mathbb{R} \}$ of probability distributions on
$L_{0,n} \cap S_n$ given by
\begin{equation}
    \QR_{n,\beta_1}(x) =  \exp\left(n^2\left(\beta_1 x_1 -
    \phi_n(\beta_1)\right) \right)\nu_n(x), \quad x \in L_{n,0}\cap S_n, \,\, \beta_1 \in
    \mathbb{R},
\end{equation}
    where $\phi_n(\beta_1)$ is the normalizing constant and $\nu_n(x) = |\{ t^{-1}(x) \}|$ is the measure on $L_{0,n}$
induced by the counting measure on $\mathcal{G}_{n,0}$ and
$t(\cdot)$.

    \begin{theorem}\label{thm:main.4}
        Let $\beta=(\beta_1,\beta_2) \in \mathbb{R}^2$ be a fixed vector and
        $0<\epsilon<1$ an arbitrary number. Then for every $n$ there
        exists an $r_0=r_0(\beta, \epsilon, n)$ such that for all $r > r_0$ the
    total variation distance between $\PR_{n,\beta +r o_0}$ and
    $\QR_{n,\beta_1}$ is less than $\epsilon$.

    \end{theorem}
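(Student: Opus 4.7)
The plan is to exploit the fact that the critical direction $o_0=(0,-1)$ only modifies the triangle coefficient: the shifted parameter $\beta+r o_0=(\beta_1,\beta_2-r)$ leaves the edge coefficient unchanged while driving the triangle coefficient to $-\infty$. As $r$ grows with $n$ fixed, this should concentrate the mass of $\PR_{n,\beta+r o_0}$ on triangle-free graphs, i.e., on $L_{0,n}\cap S_n$, and conditioned on that event the distribution no longer depends on $\beta_2-r$; by inspection it must coincide with $\QR_{n,\beta_1}$.

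To formalize this, I would decompose the partition function of $\PR_{n,\beta+r o_0}$ as $A+B$, where
\[
A = \sum_{x\in L_{0,n}\cap S_n} e^{n^2\beta_1 x_1}\,\nu_n(x) = e^{n^2\phi_n(\beta_1)}
\]
is independent of $r$ and $\beta_2$, while $B$ is the analogous sum over $x\in S_n\setminus L_{0,n}$. Since every $x\in L_{0,n}\cap S_n$ has $x_2=0$, the only $r$-dependence in $\PR_{n,\beta+r o_0}(x)$ is through the normalizer, giving $\PR_{n,\beta+r o_0}(x)=\tfrac{A}{A+B}\,\QR_{n,\beta_1}(x)$ on $L_{0,n}\cap S_n$. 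Treating $\QR_{n,\beta_1}$ as a measure on $S_n$ by extending it by $0$ off $L_{0,n}\cap S_n$, a direct computation then yields
\[
\|\PR_{n,\beta+r o_0}-\QR_{n,\beta_1}\|_{\mathrm{TV}}
= \tfrac{1}{2}\Bigl(\sum_{x\in L_{0,n}\cap S_n}\tfrac{B}{A+B}\QR_{n,\beta_1}(x) + \sum_{x\in S_n\setminus L_{0,n}} \PR_{n,\beta+r o_0}(x)\Bigr)
= \frac{B}{A+B},
\]
so it suffices to make $B/(A+B)<\epsilon$.

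For this, recall that any graph with at least one triangle satisfies $t(K_3,G_n)\geq 6/n^3$, so each summand in $B$ carries a factor $\exp(-6r/n)$ from the $-r\,t(K_3,G_n)$ term in the exponent. Combining with the crude bound $\beta_1 t(K_2,G_n)+\beta_2 t(K_3,G_n)\leq |\beta_1|+|\beta_2|$ and the count $|\mathcal{G}_n|\leq 2^{\binom{n}{2}}$, one obtains
\[
B \leq 2^{\binom{n}{2}}\exp\bigl(n^2(|\beta_1|+|\beta_2|)\bigr)\exp(-6r/n).
\]
The empty graph alone contributes $1$ to $A$, so $A\geq 1$ and hence $B/(A+B)\leq B$, which can be made smaller than $\epsilon$ by choosing $r_0$ large enough in terms of $n$, $\beta$, and $\epsilon$. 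There is no serious obstacle here: the argument is essentially a clean manipulation of the finite partition function, and the only care needed is in identifying $\QR_{n,\beta_1}$ and $\PR_{n,\beta+r o_0}$ as measures on the common space $S_n$ so that the total variation distance is unambiguously defined.
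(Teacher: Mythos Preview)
Your argument is correct and is essentially the explicit version of what the paper only sketches: the paper's one-line proof invokes the closure/reparametrization machinery of Propositions~\ref{prop:closure} and~\ref{prop:minimal}, which for $o_0=(0,-1)$ amounts exactly to your partition-function decomposition $A+B$, the identification of the limit on $L_{0,n}\cap S_n$ with $\QR_{n,\beta_1}$, and the observation that the mass off $L_{0,n}$ vanishes as $r\to\infty$. Your quantitative bound via the minimal positive triangle density $6/n^3$ is a slightly more concrete substitute for the dominated convergence step in Proposition~\ref{prop:closure}, but the route is the same.
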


    When compared to Theorem \ref{ChDi}, Theorem \ref{thm:main.4} is less
    informative, as the class of triangle free graphs is larger than the class
    of subgraphs of the Tur\'{a}n graphs $T(n,2)$. We conjecture that this
    gap can indeed be resolved by showing that, for each $\beta_1$,
    $\QR_{n,\beta_1}$ assigns a vanishingly small mass to the set of all triangle
    free graphs that are not subgraphs of some $T(n,2)$ as $n \rightarrow
    \infty$. See \cite{EKR:76} for relevant results.

%

\subsection{From convergence in total variation to stochastic convergence in cut
distance}\label{sec:var.to.cut}
The results presented so far in Section \ref{ale} 
concern convergence in total variation of the homomorphism
densities of edges and triangles to point mass distributions at
points $v_{k,n}$.  They describe a rather different type of
asymptotic guarantees from the one obtained in Section \ref{mei},
whereby convergence occurs in the functional space
$\widetilde{\mathcal{W}}$ under the cut metric.  Nonetheless, both
sets of results are qualitatively similar and lend themselves to
nearly identical interpretations. Here we sketch how the total
variation convergence results imply convergence in probability in
the cut metric along subsequences. Notice that this is precisely
the same type of conclusions we obtained at the end of the
variational analysis, as remarked in Section \ref{sec:cut.to.cut}.

    We will let $n$ be of the form $j(k+1)(k+2)$ for $j=1,2,\ldots$. For
    simplicity we consider a direction $o$ in the interior of
    $C_k$, for some $k$. By Theorem \ref{thm:main.2} and using a standard diagonal argument,
    there exists a subsequence $\{ (n_i,r_i)\}_{i=1,2,\ldots}$ such that the sequence of
    probability measures $\{\mathbb{P}_{n_i,\beta+ r_io} \}_{i = 1,2,\ldots}$ converges in total
    variation to the point mass at $v_k$. Thus, for each $\epsilon>0$, there
    exists an $i_0 = i_0(\epsilon)$ such that, for all $i > i_0$, the probability
    that a random graph $G_{n_i}$ drawn from the probability distribution
    $\mathbb{P}_{n_i,\beta+ r_io}$ is such that $t(G_{i}) \neq v_k$ is less than
    $\epsilon$.
Let $\mathcal{A}_i$ be the event that $t(G_{i}) =
v_k$ and for notational convenience denote $\mathbb{P}_{n_i,\beta+
r_io}$ by $\mathbb{P}_i$. Thus, for each $i > i_0$, $\mathbb{P}_i(\mathcal{A}_i) > 1-\epsilon$.
    Let $H$ be any finite graph. Then, denoting with $\mathbb{E}_i$ the
    expectation with respect to $\mathbb{P}_i$ and with $1_{\mathcal{A}_i}$ the indicator function of
$\mathcal{A}_i$, we have
\[
        \mathbb{E}_i[t(H,G_i)]  =  \mathbb{E}_i[t(H,G_i) 1_{\mathcal{A}_i}]
        + \mathbb{E}_i[t(H,G_i) 1_{\mathcal{A}^c_i}],
        \]
    where
    \[
\mathbb{E}_i[t(H,G_i) 1_{\mathcal{A}_i}] = t(H,T(n_i,k+1)) =
t(H,f^{K_{k+1}}),
        \]
        since, given our assumption on the
    $n_i$'s, the point in $\widetilde{\mathcal{W}}$ corresponding to $T(n_i,k+1)$ is
$\tilde{f}^{K_{k+1}}$ for all $i$. Thus, using the  fact that
density homomorphisms are bounded by $1$,
\[
\mathbb{E}_i[t(H,G_i)] - t(H,f^{K_{k+1}}) =  \mathbb{E}_i[t(H,G_i)
    1_{\mathcal{A}^c_i}] \leq \mathbb{P}_i(\mathcal{A}^c_i) =  \epsilon
    \]
for all $i > i_0$.
Thus, we conclude
that $\lim_i \mathbb{E}_i  [t(H,G_i)] = t(H,f^{K_{k+1}})$ for each
finite graph $H$. By Corollary 3.2 in \cite{DJ:08}, as $i
\rightarrow \infty$,
\[
 \delta_{\square}(\tilde{f}^{G_i},
 \tilde{f}^{K_{k+1}}) \rightarrow 0 \quad \text{in probability.}
    \]

    Similar arguments apply to the case in which $o = o_k$ for some $k > 0$.
    Using instead Theorem \ref{thm:main}, we obtain that, if  $\{ G_{n_i}
\}_{i=1,2\ldots}$ is a sequence of random graphs drawn from the
sequence of probability distributions $\{ \mathbb{P}_{n_i,\beta+
r_io} \}$, then, as $i \rightarrow \infty$,
\[
 \delta_{\square}(\tilde{f}^{G_i},
 \tilde{f}^{K_{k+2}}) \rightarrow 0 \quad \text{if } \beta \in H^+_k \text{ or } \beta \in H_k,
    \]
    and
\[
 \delta_{\square}(\tilde{f}^{G_i},
 \tilde{f}^{K_{k+1}}) \rightarrow 0 \quad \text{if } \beta \in H^-_k
    \]
    in probability.

\section{Illustrative figures}
\label{illu}

\begin{figure}
\centering
\includegraphics[width=3in]{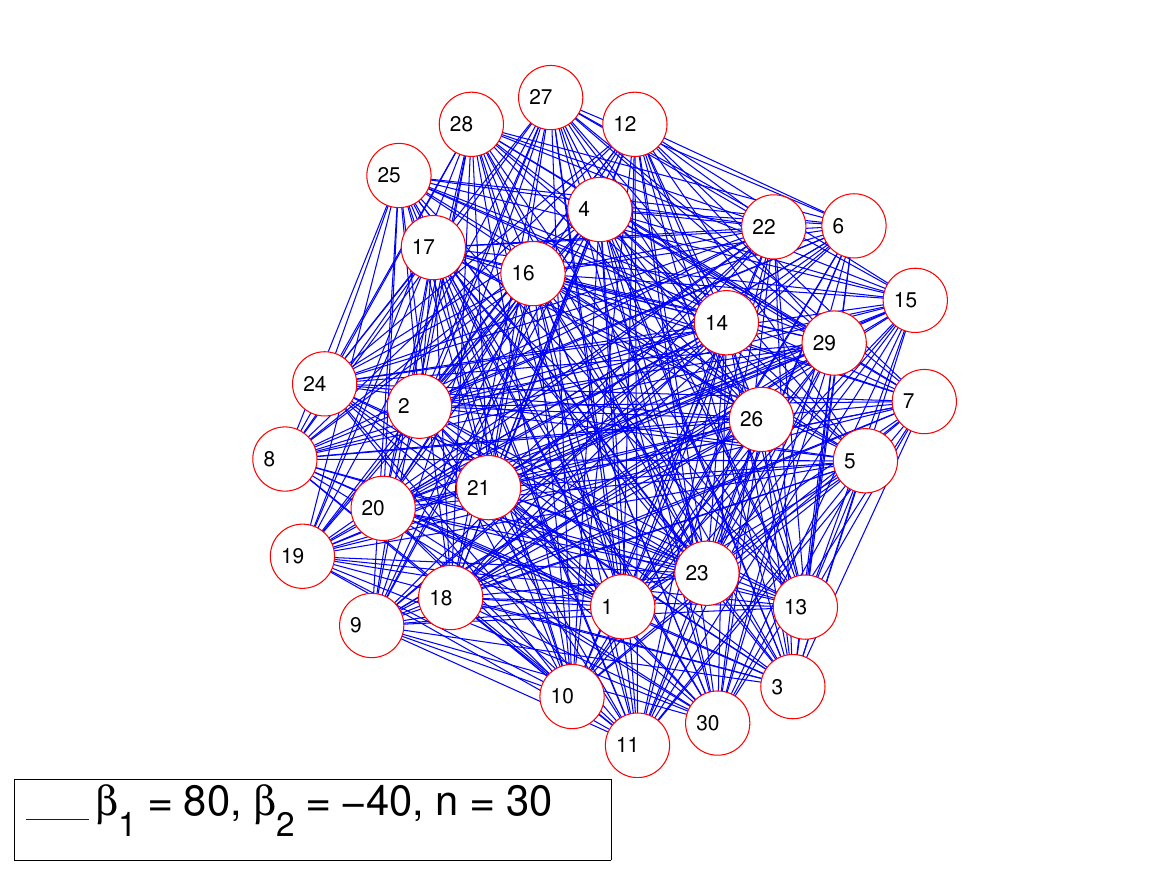}
\caption{A simulated realization of the exponential random graph
model on $30$ nodes with edges and triangles as sufficient
statistics, where the initial value $\beta=(0, 0)$, $r=80$, and
the generic direction $o=(1, -1/2)$ in $C_3^o$. The structure of
the simulated graph matches the predictions of Theorem
\ref{thm:main.2}.} \label{4}
\end{figure}

\begin{figure}
\centering
\includegraphics[width=3in]{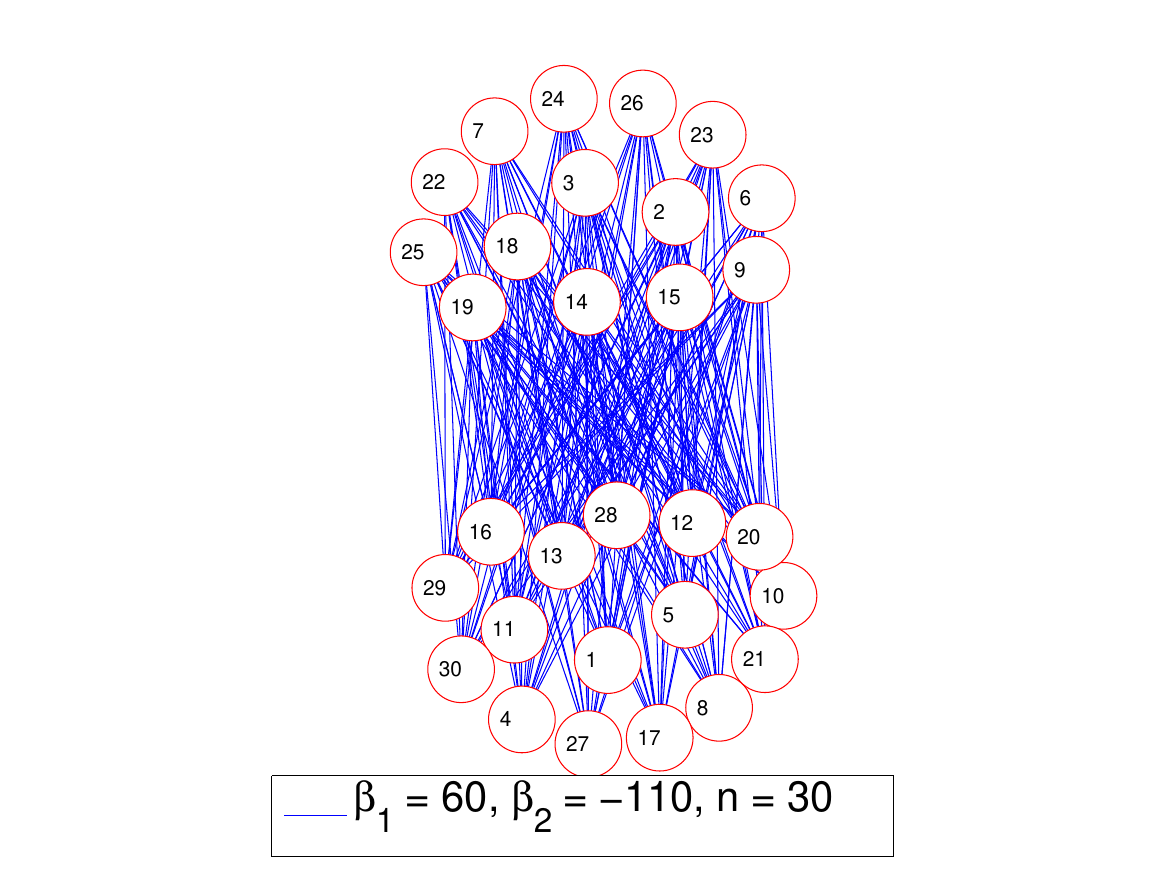}
\caption{A simulated realization of the exponential random graph
model on $30$ nodes with edges and triangles as sufficient
statistics, where the initial value $\beta=(20, -80)$ in $H_1^-$,
$r=40$, and the critical direction $o_1=(1, -3/4)$. The structure
of the simulated graph matches the predictions of Theorem
\ref{thm:main}.} \label{2}
\end{figure}

\begin{figure}
\centering
\includegraphics[width=3in]{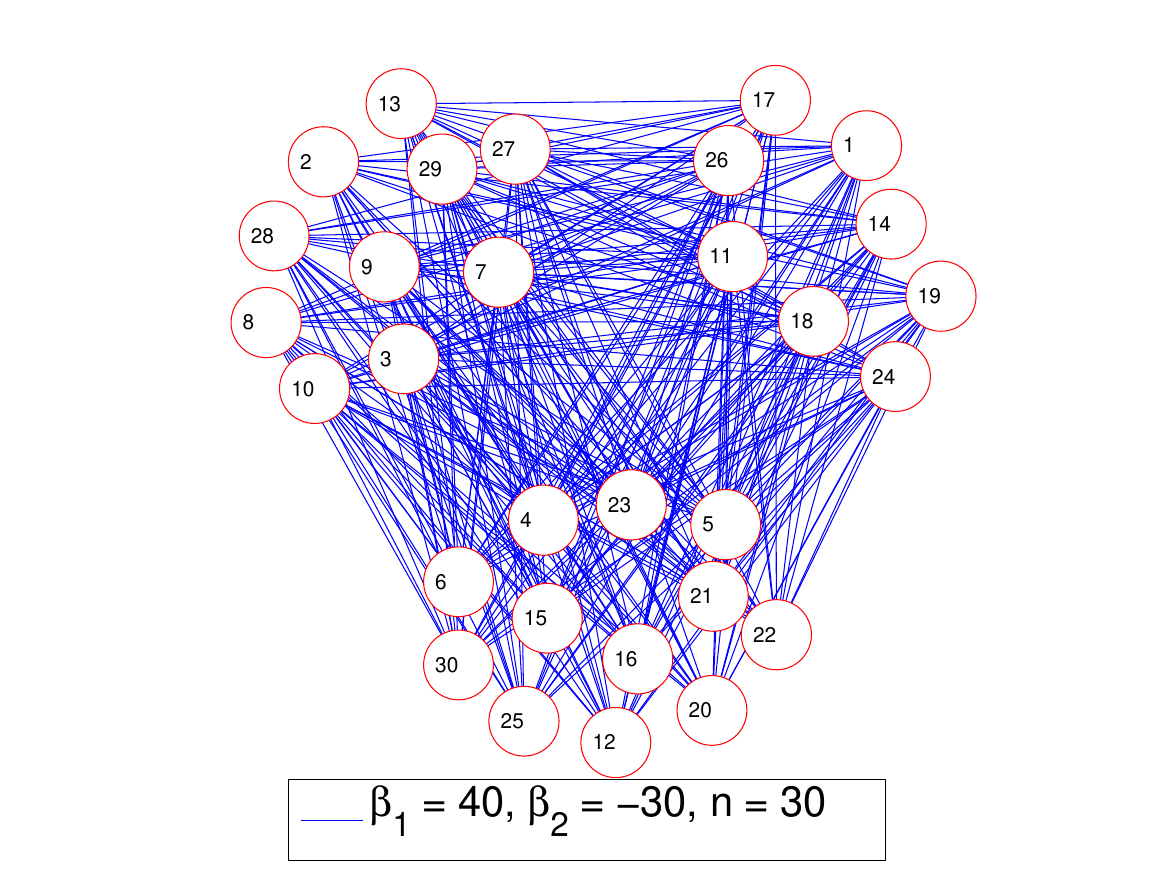}
\caption{A simulated realization of the exponential random graph
model on $30$ nodes with edges and triangles as sufficient
statistics, where the initial value $\beta=(0, 0)$ in $H_1$,
$r=40$, and the critical direction $o_1=(1, -3/4)$. The structure
of the simulated graph matches the predictions of Theorem
\ref{thm:main}.}  \label{3_1}
\end{figure}

\begin{figure}
\centering
\includegraphics[width=3in]{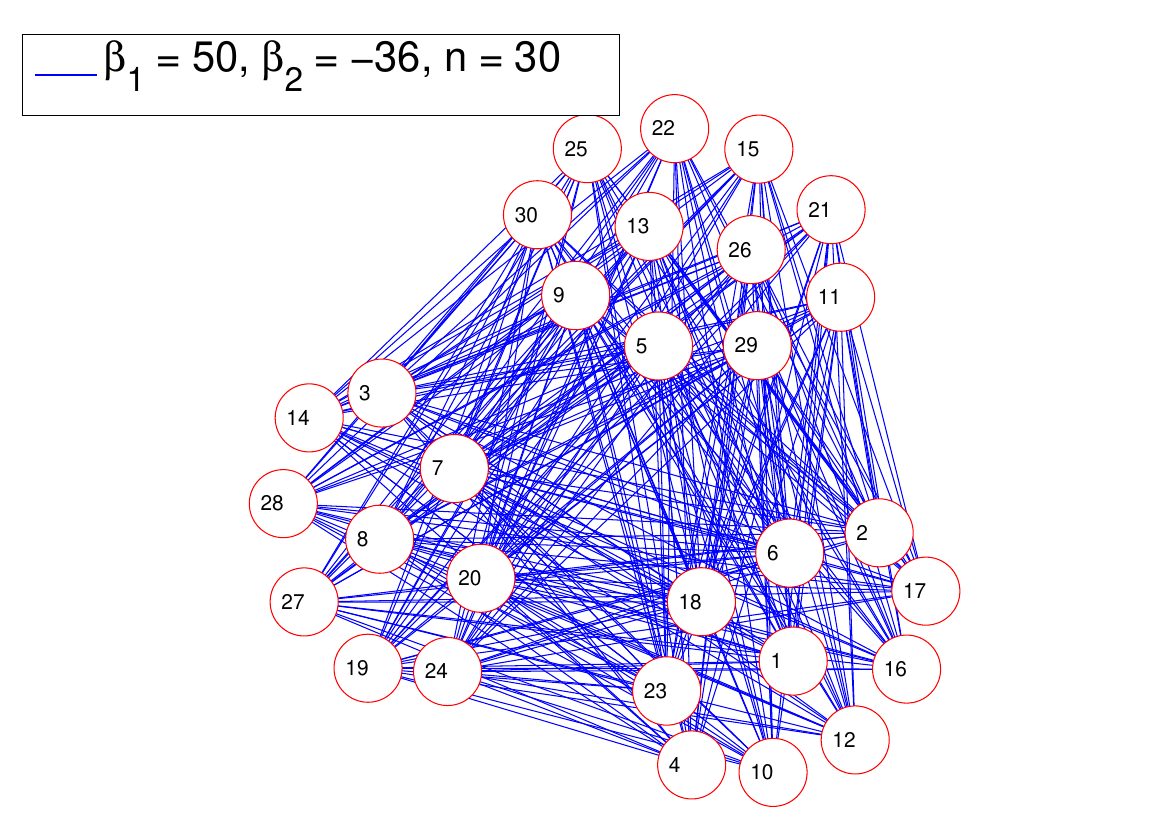}
\caption{A simulated realization of the exponential random graph
model on $30$ nodes with edges and triangles as sufficient
statistics, where the initial value $\beta=(10, -6)$ in $H_1^+$,
$r=40$, and the critical direction $o_1=(1, -3/4)$. The structure
of the simulated graph matches the predictions of Theorem
\ref{thm:main}.} \label{3_2}
\end{figure}

We have validated our theoretical findings with simulations of the
edge-triangle model under various specifications on the model
parameters. Figure \ref{4} depicts a typical realization from the
model when $n=30$ and $o$ is in $C_3^\circ$. As
predicted by Theorem \ref{thm:main.2}, the resulting graph is
complete equipartite with $4$ classes. Figures \ref{2}, \ref{3_1}
and \ref{3_2} exemplify the results of Theorem
\ref{thm:main}. For these simulations, we consider the critical
direction $o_1 = (1, -3/4)$ and again a network size of $n=30$,
and then vary the initial values of $\beta$. Figures \ref{2} and
\ref{3_2} show respectively the outcome of two typical draws when
$\beta$ is in $H_1^-$ and $H_1^+$, respectively. As predicted by
our theorem, we obtain a complete bipartite and tripartite graph.
Figure \ref{3_1} depicts instead the case of $\beta$ exactly along
the hyperplane $H_1$, for which, according to our theory, a typical
realization would again be a complete tripartite graph.

As a final remark, simulating from the extremal parameter
configurations we described using off-the-shelf MCMC methods (see,
e.g., \cite{geyer-thompson, ergm} and, for a convergence result,
\cite{CD}) is quite difficult. This is due to the fact
that under these extremal settings, the model places most of its
mass on only one or two types of Tur\'{a}n graphs, and the chance
of a chain being able to explore adequately the space of graphs
using local moves and to eventually reach the configuration of
highest energy is essentially minuscule.

\section{Further discussions}
\label{further}
As shown by Bhamidi et al. \cite{B} and Chatterjee and Diaconis \cite{CD}, as $n\rightarrow \infty$, when $\beta_2$ is positive, a typical graph drawn from
the standard edge-triangle exponential random graph model (\ref{eq:edge-triangle}) has a somewhat trivial structure: it always looks like an Erd\H{o}s-R\'{e}nyi random graph or a mixture of Erd\H{o}s-R\'{e}nyi random graphs. By raising the triangle density to an exponent $\gamma>0$, Lubetzky and Zhao \cite{LZ} proposed a natural generalization:
\begin{equation}
\label{gpmf}
\PR_{n,\beta}(G_n)=\exp\left(n^2(\beta_1
t(K_2,G_n)+\beta_2 t(K_3,G_n)^\gamma-\psi_n(\beta))\right),
\end{equation}
which enabled the
model to exhibit a non-trivial structure even when $\beta_2$ is positive.
This generalized model still features the Erd\H{o}s-R\'{e}nyi behavior if $\gamma\geq 2/3$; but for $\gamma<2/3$, there exist regions of values of $(\beta_1,\beta_2)$ for which a typical graph drawn from the model has symmetry breaking. We are interested to know how the \textit{double asymptotic} framework discussed in the earlier sections would lend itself to this generalized model.

Below we adapt our first main result for the standard model (Theorem \ref{thm.at}) to the generalized model and carry out some explicit calculations. As we will see, it conforms to the findings in \cite{LZ} and gives the limiting graphon structure for the solution of the variational problem. The proof of the theorem offers one explanation for why $2/3$ is a separating value for the exponent $\gamma$: it is intimately tied to the upper boundary of the feasible edge-triangle homomorphism densities. Furthermore, the theorem provides convincing evidence that the region of symmetry breaking for the generalized edge-triangle model is potentially much larger than the ones depicted on page 5 of \cite{LZ}. We remark that, using similar arguments, it is also possible to adapt our other results for the standard model to the generalized model, but the calculations would be rather involved, especially when they concern the lower boundary of the feasible edge-triangle homomorphism densities. Recall that for a non-negative constant $c$, we write $u=c$ when $u$ is the constant graphon with value $c$.

\begin{theorem}
\label{main.further}
Consider the generalized edge-triangle exponential random graph model (\ref{gpmf}). Let $\beta_1=a\beta_2+b$. Then
\begin{equation}
    \lim_{\beta_2 \rightarrow \infty} \sup_{\tilde{f} \in \tilde{F}^*(\beta_2) }
    \delta_{\square} (\tilde{f}, \tilde{U})=0,
\end{equation}
where for $\gamma \geq 2/3$, the set $U \subset \mathcal{W}$ is determined as follows:
\begin{itemize}
    \item $U=\{1\}$ if $a>-1$ or $a=-1$ and $b>0$,
    \item $U=\{0, 1\}$ if
$a=-1$ and $b=0$, and \item $U=\{0\}$ if $a<-1$ or $a=-1$ and
$b<0$;
\end{itemize}
and for $\gamma<2/3$, the set $U \subset \mathcal{W}$ is determined as follows:
\begin{itemize}
    \item $U=\{1\}$ if $a\geq -\frac{3}{2}\gamma$, and
    \item $U=\{f\}$ if $a<-\frac{3}{2}\gamma$,
\end{itemize}
where $f(x,y)=\left\{%
\begin{array}{ll}
    1 & \hbox{if $0\leq x,y\leq \left(-\frac{2a}{3\gamma}\right)^{\frac{1}{3\gamma-2}}$,} \\
    0 & \hbox{otherwise.} \\
\end{array}%
\right.$
\end{theorem}

\section*{Acknowledgements}
The authors benefited from participation in the 2013 workshop on
Exponential Random Network Models at the American Institute of
Mathematics. They thank the anonymous referees and the AE for their useful
comments and suggestions and Ed Scheinerman of the Johns Hopkins
University for making available the Matgraph toolbox that was used
as a basis for many of their MATLAB simulations. Mei Yin
gratefully acknowledges useful discussions with Charles Radin and
Lorenzo Sadun.

\section{Proofs}

\begin{proof}[Proof of Theorem \ref{thm.at}]
Suppose $H_2$ has $p$ edges. Subject to $\beta_1=a\beta_2+b$, the
variational problem (\ref{lmax}) in the Erd\H{o}s-R\'{e}nyi region
takes the following form: Find $u$ so that
\begin{equation}
\beta_2(au+u^p)+bu-I(u)
\end{equation}
is maximized. Take an arbitrary sequence $\beta_2^{(i)}
\rightarrow \infty$. Let $u_i$ be a maximizer corresponding to
$\beta_2^{(i)}$ and $u^*$ be a limit point of the sequence $\{ u_i
\}$. By the boundedness of $bu$ and $I(u)$, we see that $u^*$ must
maximize $au+u^p$. For $a\neq -1$, this maximizer is unique, but
for $a=-1$, both $0$ and $1$ are maximizers. In this case,
$\beta_2(au+u^p)=0$, so we check the value of $bu-I(u)$ as well.
We conclude that $u^*=1$ for $b>0$, $u^*=0$ for $b<0$, and $u^*$
may be either $1$ or $0$ for $b=0$.
\end{proof}

The following lemma appeared as an exercise in \cite{L2}.

\begin{lemma}[Lov\'{a}sz]
\label{chromatic} Let $F$ and $G$ be two simple graphs. Let $f$ be
a graphon such that $t(F, G)>0$ and $t(G, f)>0$. Then $t(F, f)>0$.
\end{lemma}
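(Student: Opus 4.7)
The hypotheses translate into set-theoretic positivity statements. The condition $t(F,G)>0$ yields a homomorphism $\phi:V(F)\to V(G)$, and since $f$ is non-negative, $t(G,f)>0$ is equivalent to saying that the set
\[
S\;=\;\{y\in[0,1]^{V(G)}: f(y_v,y_w)>0\ \text{for every}\ \{v,w\}\in E(G)\}
\]
has positive Lebesgue measure. For the same reason, $t(F,f)>0$ is equivalent to asking that the analogous set $T\subset[0,1]^{V(F)}$ has positive Lebesgue measure, so this will be our goal.

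The naive attempt sets $x_a=y_{\phi(a)}$ for some $y\in S$: the homomorphism property of $\phi$ immediately places such an $x$ in $T$, but these points lie on the partial-diagonal subspace $\{x:x_a=x_b\text{ whenever }\phi(a)=\phi(b)\}$, which has Lebesgue measure zero as soon as $\phi$ is not injective. The plan is to \emph{fatten} this diagonal using a Lebesgue density argument. Pick a Lebesgue density point $y^*\in S$; then for any $\delta>0$ one can find $\epsilon>0$ such that the cube $C=\prod_{v\in V(G)}[y^*_v-\epsilon,y^*_v+\epsilon]$ satisfies $\lambda(S\cap C)\geq(1-\delta)\,\lambda(C)$.

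Let $W=\{(x,y)\in[0,1]^2: f(x,y)>0\}$. Since $S$ is contained in the cylinder $\{y:(y_v,y_w)\in W\}$ for each edge $\{v,w\}\in E(G)$, a Fubini calculation converts the density estimate on $C$ into a two-dimensional density estimate that holds simultaneously for all such edges:
\[
\lambda_2\bigl(W\cap[y^*_v-\epsilon,y^*_v+\epsilon]\times[y^*_w-\epsilon,y^*_w+\epsilon]\bigr)\;\geq\;(1-\delta)(2\epsilon)^2.
\]
Now consider the product cube $C'=\prod_{v\in V(G)}[y^*_v-\epsilon,y^*_v+\epsilon]^{F_v}\subset[0,1]^{V(F)}$, where $F_v=\phi^{-1}(v)$. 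For $x$ uniformly distributed on $C'$ and any edge $\{a,b\}\in E(F)$ with $\phi(a)=v$, $\phi(b)=w$, the marginal pair $(x_a,x_b)$ is uniform on the $2\epsilon\times 2\epsilon$ square around $(y^*_v,y^*_w)$, so $(x_a,x_b)\notin W$ with probability at most $\delta$. Taking $\delta<1/|E(F)|$ and applying the union bound over the finitely many edges of $F$ produces a positive-measure subset of $C'$ inside $T$, proving $\lambda(T)>0$.

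The main technical obstacle is the passage from a density statement on $S\subset[0,1]^{V(G)}$ to the two-dimensional density statements on each edge-coordinate square, which I expect to handle cleanly via Fubini; the non-injectivity of $\phi$ that breaks the naive pull-back is then absorbed by letting the variables $\{x_a:a\in F_v\}$ range independently within a common small interval around $y^*_v$.
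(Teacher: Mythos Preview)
Your proposal is correct and follows the same Lebesgue density theorem approach that the paper invokes. The paper's proof is much terser---it simply asserts that the ``preimage of $A$ under $h$'' is a positive-measure set on which the $E(F)$-product is positive---whereas you correctly identify the diagonal obstruction and resolve it via the density-point/Fubini/union-bound argument that the paper leaves implicit.
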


\begin{proof}
Suppose $|V(F)|=m$ and $|V(G)|=n$. Since $t(G, f)>0$,
there is a Lebesgue measurable set $A \subseteq \mathbb{R}^n$ and
$|A| \neq 0$ such that $\prod_{\{i,j\}\in E(G)}f(x_i, x_j)\\>0$ for
$x \in A$. Since $t(F, G)>0$, there exists a graph homomorphism
$h: V(F) \rightarrow V(G)$. Since $F$ and $G$ are both labeled graphs, this naturally induces a map $h': \mathbb{R}^m \rightarrow \mathbb{R}^n$. If $h$ is one-to-one, take $B=(h')^{-1}(A) \subseteq \mathbb{R}^m$. Then clearly $|B| \neq
0$ is Lebesgue measurable and $\prod_{\{i,j\}\in E(F)}f(y_i, y_j)>0$ for $y \in B$. If $h$ is not one-to-one, identifying $B \subseteq \mathbb{R}^m$ with $|B| \neq 0$ requires treating the vertices of $V(F)$ that map to the same vertex of $V(G)$ under $h$ as independent coordinates. We illustrate this procedure through a simple example. Suppose $F$ is a two-star consisting of edges $\{1,2\}$ and $\{1,3\}$ and $G$ is a single edge $\{1,2\}$. A graph homomorphism between the two vertex sets $V(F)$ and $V(G)$ may be given by $1\mapsto 1, 2\mapsto 2, 3\mapsto 2$. Say we have found a Lebesgue measurable set $A=\{(x_1,x_2): a\leq x_1\leq b, c(x_1)\leq x_2\leq d(x_1)\} \subseteq \mathbb{R}^2$ such that $f(x_1, x_2)>0$ for $x \in A$. Take $B=\{(y_1, y_2, y_3):a \leq y_1 \leq b, c(y_1)\leq y_2\leq d(y_1), c(y_1)\leq y_3\leq d(y_1)\} \subseteq \mathbb{R}^3$. Then clearly $|B| \neq 0$ is Lebesgue measurable and $f(y_1, y_2)f(y_1, y_3)>0$ for $y \in B$. It follows that $t(F, f)>0$.
\end{proof}

\begin{proof}[Proof of Theorem \ref{ChDi}]
Take an arbitrary sequence $\beta_2^{(i)} \rightarrow -\infty$.
For each $\beta_2^{(i)}$, we examine the corresponding variational
problem (\ref{setmax}). Let $\tilde{f}_i$ be an element of
$\tilde{F}^*(\beta_2^{(i)})$. Let $\tilde{f}^*$ be a limit point
of $\tilde{f}_i$ in $\widetilde{\mathcal{W}}$ (its existence is
guaranteed by the compactness of $\widetilde{\mathcal{W}}$).
Suppose $t(H_2, f^*)>0$. Then by the continuity of $t(H_2, \cdot)$
and the boundedness of $t(H_1, \cdot)$ and
$\iint_{[0,1]^2}I(\cdot)dxdy$, $\lim_{i \rightarrow
\infty}\psi_\infty(\beta_2^{(i)})=-\infty$. But this is impossible
since $\psi_\infty(\beta_2^{(i)})$ is uniformly bounded below, as
can be easily seen by considering $f^{K_{r-1}}$ as a test function (see (\ref{eq:turan-graphon})), where $K_r$ denotes a complete graph on $r$
vertices. Thus $t(H_2, f^*)=0$. Since $H_2$ has chromatic number $r$, $t(H_2, K_r)>0$,
which implies that $t(K_r, f^*)=0$ by Lemma \ref{chromatic}. By the graphon version of
Tur\'{a}n's theorem for $K_r$-free graphs \cite{Pik}, the edge density $e$ of
$f^*$ must satisfy $e=t(H_1, f^*)\leq (r-2)/(r-1)$. This implies
that the measure of the set $\{(x, y) \in [0, 1]^2|f^*(x, y)>0\}$
is at most $(r-2)/(r-1)$. Otherwise, the
graphon $\bar{f}(x, y)=\left\{%
\begin{array}{ll}
    1 & \hbox{$f^*(x, y)>0$,} \\
    0 & \hbox{otherwise} \\
\end{array}%
\right.$ would be $K_r$-free but with edge density greater than
$(r-2)/(r-1)$, which is impossible.

Take an arbitrary edge density $e \leq (r-2)/(r-1)$. We consider
all graphons $f$ such that $t(H_1, f)=e$ and $t(H_2, f)=0$.
Subject to these constraints, maximizing (\ref{setmax}) is
equivalent to minimizing $\iint_{[0,1]^2}I(f)dxdy$. Since $t(H_2,
f)=0$, as argued above, the set $A=\{(x, y) \in [0, 1]^2|f(x,
y)>0\}$ has measure at most $(r-2)/(r-1)$. If the measure of $A$
is less than $(r-2)/(r-1)$, we randomly group part of the set $[0,
1]^2-A$ into $A$ so that the measure of $A$ is exactly
$(r-2)/(r-1)$. We note that
\begin{equation}
\iint_{[0, 1]^2} I(f(x, y))dxdy=\iint_{A}I(f(x, y))dxdy.
\end{equation}
More importantly, since $I(\cdot)$ is convex, by Jensen's
inequality, we have
\begin{equation}
\iint_{A}I(f(x, y))dxdy \geq \frac{r-2}{r-1} I\left(\iint_{A}
\frac{r-1}{r-2}f(x, y) dxdy\right)=\frac{r-2}{r-1}
I\left(\frac{r-1}{r-2}e\right),
\end{equation}
where the first equality is obtained only when $f(x, y)\equiv
e(r-1)/(r-2)$ on $A$.

The variational problem (\ref{setmax}) is now further reduced to
the following: Find $e\leq (r-2)/(r-1)$ (and hence $f(x, y)$) so
that
\begin{equation}
\beta_1 e-\frac{r-2}{r-1}I\left(\frac{r-1}{r-2}e\right)
\end{equation}
is maximized. Simple computation yields $e=p(r-2)/(r-1)$, where
$p=e^{2\beta_1}/(1+e^{2\beta_1})$. Thus $pf^{K_{r-1}}$ is a maximizer for (\ref{setmax}) as $\beta_2
\rightarrow -\infty$. We claim that any other maximizer $h$ (if it
exists) must lie in the same equivalence class. Recall that $h$
must be $K_r$-free. Also, $h$ is zero on a set of measure
$1/(r-1)$ and $p$ on a set of measure $(r-2)/(r-1)$. The graphon $\bar{h}(x, y)=\left\{%
\begin{array}{ll}
    1 & \hbox{$h(x, y)=p$,} \\
    0 & \hbox{otherwise} \\
\end{array}%
\right.$ describes a $K_r$-free graph with edge density
$(r-2)/(r-1)$. By the graphon version of Tur\'{a}n's theorem \cite{Pik}, $\bar{h}$ corresponds to
the complete $(r-1)$-equipartite graph, and is thus equivalent to
$f^{K_{r-1}}$. Hence $h=p\bar{h}$ is equivalent to $pf^{K_{r-1}}$.
\end{proof}

\begin{proof}[Proof of Theorem \ref{main}]
Subject to $\beta_1=a\beta_2+b$, the variational problem
(\ref{setmax}) takes the following form: Find $f(x, y)$ so that
\begin{equation}
\beta_2 (ae+t)+be-\iint_{[0,1]^2}I(f(x, y))dxdy
\end{equation}
is maximized, where $e=t(H_1, f)$ denotes the edge density and
$t=t(H_2, f)$ denotes the triangle density of $f$, respectively.
Take an arbitrary sequence $\beta_2^{(i)} \rightarrow -\infty$.
Let $\tilde{f}_i$ be an element of $\tilde{F}^*(\beta_2^{(i)})$.
Let $\tilde{f}^*$ be a limit point of $\tilde{f}_i$ in
$\widetilde{\mathcal{W}}$ (its existence is guaranteed by the
compactness of $\widetilde{\mathcal{W}}$). By the continuity of
$t(H_2, \cdot)$ and the boundedness of $t(H_1, \cdot)$ and
$\iint_{[0,1]^2}I(\cdot)dxdy$, we see that $f^*$ must minimize
$ae+t$. This implies that $f^*$ must lie on the Razborov curve
(i.e., lower boundary of the feasible region) (see Figure
\ref{et}). Note further that $ae+t$ is a linear function, so $f^*$ must minimize over the convex hull $P$ of $R$ (see Figure \ref{p}).
Since $R$ and $P$ only intersect at the points $v_k$, $k=1,2,\ldots$, $f^*$ corresponds to a Tur\'{a}n graphon with $k$ classes.

Consider two adjacent points $v_k=(e_k, t_k)$ and
$v_{k+1}=(e_{k+1}, t_{k+1})$, where
\begin{equation}
(e_k, t_k)=\left(\frac{k}{k+1}, \frac{k(k-1)}{(k+1)^2}\right)
\text{ and } (e_{k+1}, t_{k+1})=\left(\frac{k+1}{k+2},
\frac{k(k+1)}{(k+2)^2}\right).
\end{equation}
Let $L_k$ be the line segment joining these two points. The slope
of the line passing through $L_k$ is
\begin{equation}
\frac{k(3k+5)}{(k+1)(k+2)}=-a_k.
\end{equation}
It is clear that $a_k$ is a decreasing function of $k$ and $a_k
\rightarrow -3$ as $k \rightarrow \infty$. More importantly, if
$a>a_k$, then $ae_k+t_k<ae_{k+1}+t_{k+1}$; if $a=a_k$, then
$ae_k+t_k=ae_{k+1}+t_{k+1}$; and if $a<a_k$, then
$ae_k+t_k>ae_{k+1}+t_{k+1}$. Decreasing $a$ thus moves the
location of the minimizer $f^*$ upward,
with sudden jumps happening at special angles $a=a_k$, where the
sign of $b$ comes into play as in the proof of Theorem
\ref{thm.at}.
\end{proof}

\begin{proof}[Proof of Lemma \ref{lem:Pn}]
For part 1., the proof of Theorem 2  in \cite{B:76} implies that
any linear functional of the form $L_{\gamma}(x) = \langle x, c
\rangle$, where $c = (1, \gamma)^\top$ with $\gamma \in
\mathbb{R}$, is maximized over $P_n$ by some $v_{k,n}$ and,
conversely, any point $v_{k,n}$ is such that
\begin{equation}
    v_{k,n} = \mathrm{argmax}_{x \in P_n} L_{\gamma}(x)
\end{equation}
    for some $\gamma \in \mathbb{R}$. Thus, $P_n$ is the convex hull of the points $\{
    v_{k,n},k=0,1,\ldots,n-1\}$.
  Next, if $r \geq \lceil n/2 \rceil $, the size of the
  larger class(es) of any $T(n, r)$ is $2$ and
   the size of the smaller class(es) (if any) is $1$. Thus, the increase in the
   number of edges and triangles going from $T(n, r)$ to $T(n, r+1)$ is $1$ and
   $(n-2)$, respectively. As a result, the points $\{ t(T(n, r)), r = \lceil n/2
   \rceil, \ldots, n \}$ are collinear.

To show part 2., notice that, by definition, $P_n \subset P$, so
it is enough to show that for any $x \in P$ and $\epsilon > 0$
there exists an $n' = n'(x,\epsilon)$ such that $\inf_{y \in P_n}
\| x - y\| < \epsilon$ for all $n > n'$. But this follows from the
fact that, for each fixed $k$, $\lim_{n \rightarrow \infty}
v_{k,n} = v_k$ and every $x \in P$ is either an extreme point of
$P$ or is contained in the convex hull of a {\it finite} number of
extreme points of $P$.

The first claim of part 3. can be directly verified with easy
algebra (see (\ref{eq:tGn})). The second claim follows from
Theorem 4.1 in \cite{Razborov} and the strict concavity of the
lower boundary of $R$ on each subinterval $\left[ ( k-1)/k,k/(k+1)
\right]$.
\end{proof}

    The key steps of the proofs of Theorems \ref{thm:main.2} and, in
    particular, \ref{thm:main} rely on a careful
    analysis of the closure of the exponential family corresponding to the model
    under study. For the sake of clarity, we will provide a self-contained
    treatment. For details, see \cite{CS:05, CS:08, GEYER:09, R}.

    \subsubsection*{The closure of $\mathcal{E}_n$}
    Fix a positive integer $k$. We first describe the total variation closure of the family
    $\mathcal{E}_n$ for $n$ tending to infinity as $n =
j(k+1)(k+2)$, for $j=1,2,\ldots$. As a result, for all such $n$,
$v_{k,n} = v_k$ and $v_{k+1,n} = v_{k+1}$, which implies $L_{k,n}
= L_k$ (see Lemma \ref{lem:Pn}).


 Let $\nu_{k,n}$ be the restriction of   $\nu_n$ to $L_{k}$ and consider the exponential family on $P_n \cap
    L_{k,n} = \{ v_{k}, v_{k+1} \}$ generated by $\nu_{k,n}$ and $t$, and
    parametrized by $\mathbb{R}^2$, which we
    denote with $\mathcal{E}_{k,n}$.
    Thus, the probability of observing the point $x \in  \{ v_{k},
    v_{k+1} \}$ is
\begin{equation}\label{eq:exp2}
    \PR_{n,k,\beta}(x) = \frac{e^{ n^2 \langle x, \beta \rangle}}{e^{n^2 \langle v_{k},
\beta \rangle} \nu_n(v_{k})+ e^{n^2\langle v_{k+1}, \beta \rangle}
\nu_n(v_{k+1})} \nu_{n}(x), \quad \beta \in
    \mathbb{R}^2.
\end{equation}

The new family $\mathcal{E}_{k,n}$ is an element of the closure of
$\mathcal{E}_n$ in the topology corresponding to the variation
metric. More precisely, the family $\mathcal{E}_{k,n}$ is
comprised by all the limits in total variation of sequences of
distributions from $\mathcal{E}_n$ parametrized by sequences $\{
\beta^{(i)}\} \subset \mathbb{R}^2$ such that $\lim_i
\|\beta^{(i)}\| = \infty$ and $\lim_i
\frac{\beta^{(i)}}{\|\beta^{(i)} \|} = \frac{o_k}{\| o_k
\|}$.

\begin{proposition}\label{prop:closure}
    Let $n$ be fixed and a multiple of $(k+1)(k+2)$. For any $\beta \in
    \mathbb{R}^2$, consider the sequence of parameters
    $\{\beta^{(i)}  \}_{i=1,2,\ldots}$ given by
    $\beta^{(i)} = \beta + r_i o_{k}$, where $\{r_i\}_{i=1,2,\ldots}$ is a sequence of positive
numbers tending to infinity. Then,
\[
    \lim_i \PR_{n,\beta^{(i)}} (x) = \left\{
    \begin{array}{ll}
        \PR_{n,k,\beta}(x) & \text{if }  x \in \{
    v_{k},v_{k+1}\},\\
    0 & \text{if }x \in S_n \setminus \{v_k, v_{k+1}\}.\\
\end{array}
\right.
    \]
    In particular, $\PR_{n,\beta^{(i)}}$ converges in total variation to
    $\PR_{n,k,\beta}$ as $i \rightarrow \infty$.
\end{proposition}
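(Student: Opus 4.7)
The plan is to exploit directly the extremal geometry of $P_n$ with respect to the direction $o_k$ and then perform a standard ``freezing'' argument on the exponential weights. By construction, $o_k$ is the outer normal to the facet $L_k$ of $P$, and since $n$ is a multiple of $(k+1)(k+2)$, Lemma \ref{lem:Pn} (parts 1 and 3) guarantees that $v_k$ and $v_{k+1}$ are both vertices of $P_n$ and that $L_{k,n}=L_k$, with $L_k\cap S_n=\{v_k,v_{k+1}\}$. Therefore, setting $M:=\max_{y\in S_n}\langle y,o_k\rangle$, the strict inequality $\langle y,o_k\rangle<M$ holds for every $y\in S_n\setminus\{v_k,v_{k+1}\}$, while equality holds on $\{v_k,v_{k+1}\}$.

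Next, I would write out the probabilities explicitly. For each $i$ and each $x\in S_n$,
\begin{equation*}
    \PR_{n,\beta^{(i)}}(x)=\frac{e^{n^2\langle x,\beta\rangle}\,e^{n^2 r_i(\langle x,o_k\rangle-M)}\,\nu_n(x)}{\displaystyle\sum_{y\in S_n}e^{n^2\langle y,\beta\rangle}\,e^{n^2 r_i(\langle y,o_k\rangle-M)}\,\nu_n(y)},
\end{equation*}
obtained by multiplying numerator and denominator by $e^{-n^2 r_i M}$. Since $r_i\to\infty$, the factor $e^{n^2 r_i(\langle y,o_k\rangle-M)}$ tends to $0$ for $y\in S_n\setminus\{v_k,v_{k+1}\}$ and equals $1$ for $y\in\{v_k,v_{k+1}\}$. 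Hence the denominator converges to $e^{n^2\langle v_k,\beta\rangle}\nu_n(v_k)+e^{n^2\langle v_{k+1},\beta\rangle}\nu_n(v_{k+1})$, which is strictly positive. The numerator converges to $0$ when $x\notin\{v_k,v_{k+1}\}$ and to $e^{n^2\langle x,\beta\rangle}\nu_n(x)$ otherwise, yielding exactly the pointwise limits stated, namely $\PR_{n,k,\beta}(x)$ on $\{v_k,v_{k+1}\}$ and $0$ off it (compare with \eqref{eq:exp2}).

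Finally, since $S_n$ is finite, pointwise convergence of $\PR_{n,\beta^{(i)}}$ to a probability mass function supported on $\{v_k,v_{k+1}\}$ is automatically equivalent to convergence in total variation, which furnishes the last claim of the proposition. The only genuinely non-routine ingredient is the strict inequality $\langle y,o_k\rangle<M$ off $\{v_k,v_{k+1}\}$; this is precisely where the hypothesis that $n$ is a multiple of $(k+1)(k+2)$ is used, through part 3 of Lemma \ref{lem:Pn}. Without this divisibility, $L_k$ could in principle meet $S_n$ at additional lattice points, and one would need to be more careful about which points survive in the limit---but under the stated assumption the argument above is essentially a one-line consequence of the geometry of $P_n$.
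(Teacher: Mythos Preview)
Your proof is correct and follows essentially the same approach as the paper's: both arguments hinge on the geometric fact that $\langle y,o_k\rangle$ is maximized over $S_n$ exactly at $\{v_k,v_{k+1}\}$ (via Lemma~\ref{lem:Pn}, part~3), and then let the sub-maximal exponential terms vanish. Your normalization by $M=\max_{y\in S_n}\langle y,o_k\rangle$ is a slightly cleaner way to organize the same computation that the paper carries out by splitting into the two cases $x^*\in\{v_k,v_{k+1}\}$ and $x^*\notin\{v_k,v_{k+1}\}$.
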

\begin{proof}
The proof can be found in, e.g., \cite{CS:05, R}. We provide it
for completeness.

Let $x^* \in S_n$. Then, for any $\beta \in \mathbb{R}^2$,
\[
    \lim_{i \rightarrow \infty} \PR_{n,\beta^{(i)}}(x^*) = \frac{e^{n^2\langle
    x^*,\beta \rangle }  }{
    \lim_{i \rightarrow \infty}     e^{n^2\psi_n(\beta^{(i)}) - r_i n^2\langle
    x^*,o_k \rangle  }} \nu_n(x^*).
    \]
    First suppose that $x^* \in \{ v_k, v_{k+1}\}$. Then,

    \begin{multline}
    e^{n^2\psi_n(\beta^{(i)}) - r_i n^2 \langle x^*, o_k \rangle  }\\\notag=\sum_{x \in S_n
    \setminus \{ v_k,v_{k+1}\}}
    e^{n^2\langle x, \beta \rangle + r_i n^2\langle x - x^*, o_k  \rangle  }  \nu_n(x)
    +\sum_{x \in \{ v_k, v_{k+1}\}}  e^{n^2\langle x, \beta \rangle }
    \nu_n(x)\\\notag
     \downarrow  e^{n^2\langle v_{k},
\beta \rangle} \nu_n(v_{k})+ e^{n^2\langle v_{k+1}, \beta \rangle}
\nu_n(v_{k+1}),\notag
\end{multline}
as $i    \rightarrow \infty$, because $\sum_{x \in S_n \setminus
\{ v_k,v_{k+1}\}} e^{n^2\langle x, \beta \rangle + r_i n^2\langle
x - x^*, o_k  \rangle  }  \nu_n(x) \downarrow 0$.
    This follows easily since the term $\langle x -  x^*, o_k \rangle$ is $0$ if $x \in \{
    v_k, v_{k+1}\}$ and is strictly negative otherwise.
    Thus, $\PR_{n,\beta^{(i)}}(x^*)$ converges to $\PR_{n,k,\beta}(x^*)$
    (see (\ref{eq:exp2})).

    If $x^* \in S_n \setminus \{ v_k, v_{k+1}\}$, since $P_n$ is full
    dimensional, we have instead
    \[
    e^{n^2\psi_n(\beta^{(i)}) - r_i n^2\langle x^*, o_k \rangle  } \geq  \sum_{x \in
    S_n \colon \langle x - x^*, o_k \rangle > 0} e^{n^2\langle x, \beta \rangle +
    r_i n^2\langle x - x^*, o_k  \rangle  }  \nu_n(x) \rightarrow \infty,
        \]
    as $i \rightarrow \infty$. Therefore $\PR_{n,\beta^{(i)}}(x^*)
        \rightarrow 0$. The proof is now complete.
\end{proof}

The parametrization \eqref{eq:exp2} is thus redundant, as it
requires two parameters  to represent a distribution whose support
lies on a $1$-dimensional hyperplane. One parameter is all that is
needed to describe this distribution, a reduction that can be
accomplished by standard arguments. Because such reparametrization
is highly relevant to our problem, we provide the details.

\begin{proposition}
    \label{prop:minimal}
    The family $\mathcal{E}_{k,n}$ is a one-dimensional exponential family
    parametrized by $\mathcal{L}_{k}$. Equivalently, $\mathcal{E}_{k,n}$ can be
    parametrized with $\{\langle l_{k}, \beta \rangle,\\ \beta \in \mathbb{R}^2\}=\mathbb{R}$ as follows:
     \begin{equation}\label{eq:reduced}
    \PR_{n,k,\beta}(x) = \frac{e^{n^2\langle x, l_{k} \rangle \cdot \langle
    l_{k}, \beta \rangle }   }{ e^{n^2\langle v_{k}, l_{k} \rangle \cdot
    \langle l_{k}, \beta \rangle}\nu_n(v_{k})  +e^{ n^2\langle v_{k+1}, l_{k} \rangle
    \cdot \langle l_{k}, \beta \rangle} \nu_n(v_{k+1}) } \nu_{n}(x),
    \end{equation}
where $ x \in \{v_{k},v_{k+1} \}$.
\end{proposition}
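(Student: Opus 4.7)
The plan is to exploit the fact that the support $\{v_k, v_{k+1}\}$ of every distribution in $\mathcal{E}_{k,n}$ lies on the facet $L_k$, which by construction is parallel to $l_k$ and has $o_k$ as an outer normal. Consequently, the orthogonal decomposition $\mathbb{R}^2 = \mathcal{L}_k \oplus \mathcal{L}_k^\perp = \operatorname{span}(l_k) \oplus \operatorname{span}(o_k)$ will split the natural parameter $\beta$ into one component that genuinely influences the distribution and one component that can be absorbed into the normalizing constant.

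First, I would write $\beta = \langle l_k,\beta\rangle\, l_k + \langle \hat{o}_k,\beta\rangle\, \hat{o}_k$, where $\hat{o}_k := o_k/\|o_k\|$, and expand the inner product occurring in \eqref{eq:exp2} as
\[
\langle x,\beta\rangle = \langle l_k,\beta\rangle\,\langle x,l_k\rangle + \langle \hat{o}_k,\beta\rangle\,\langle x,\hat{o}_k\rangle
\]
for $x \in \{v_k, v_{k+1}\}$. The crucial observation is that, because $L_k$ is orthogonal to $o_k$ and contains both $v_k$ and $v_{k+1}$, the scalar $\langle x,\hat{o}_k\rangle$ takes the same value at $x = v_k$ and $x = v_{k+1}$. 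This is immediate from $v_{k+1} - v_k \in \operatorname{span}(l_k) \perp o_k$, and is the key geometric input of the argument.

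Next, I would substitute this decomposition into \eqref{eq:exp2}. The common factor $\exp\bigl(n^2 \langle \hat{o}_k,\beta\rangle\,\langle v_k,\hat{o}_k\rangle\bigr)$ then appears in the numerator and in each of the two terms of the denominator, and cancels. What is left is precisely the expression in \eqref{eq:reduced}, which manifestly depends on $\beta$ only through the scalar $\langle l_k,\beta\rangle$. Since the linear map $\beta \mapsto \langle l_k,\beta\rangle$ from $\mathbb{R}^2$ onto $\mathbb{R}$ is surjective, the family $\mathcal{E}_{k,n}$ is parametrized by $\mathbb{R}$, equivalently by $\mathcal{L}_k$, and the sufficient statistic becomes the scalar $\langle x,l_k\rangle$, exhibiting $\mathcal{E}_{k,n}$ as a one-dimensional exponential family.

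No step presents a real obstacle: the proposition is a textbook instance of the general principle that restricting a full exponential family to an affine face of its convex support yields a lower-dimensional exponential family whose natural parameter is the quotient of the original parameter space by the linear subspace orthogonal to the face. The only small verification is the constancy of $\langle x,\hat{o}_k\rangle$ on $L_k$, which follows instantly from the definition of $o_k$ as an outer normal direction of $L_k$.
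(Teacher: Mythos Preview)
Your proposal is correct and is essentially identical to the paper's own proof: both expand $\langle x,\beta\rangle$ using the orthonormal basis $\{l_k,\hat{o}_k\}$, invoke the constancy of $\langle x,\hat{o}_k\rangle$ on $\{v_k,v_{k+1}\}$ (the paper calls this constant $\alpha_k$), and cancel the resulting common factor from numerator and denominator of \eqref{eq:exp2} to obtain \eqref{eq:reduced}.
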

\begin{proof}
For an $x \in \mathbb{R}^2$ and a linear subspace $\mathcal{S}$ of
$\mathbb{R}^2$, let $\Pi_{\mathcal{S}}(x)$ be the orthogonal
projection of $x$ onto $\mathcal{S}$ with respect to the Euclidean
metric. Set $\tilde{o}_{k} = \frac{o_k}{\|o_k\|}$ and let
$\alpha_k  \in \mathbb{R}$ define the one-dimensional hyperplane
(i.e., the line) going through $L_k$, i.e., $\{ x \in \mathbb{R}^2
\colon \langle x, \tilde{o}_{k}
    \rangle = \alpha_k \}$.
Then for every $\beta \in \mathbb{R}^2$ and $x \in  \{ v_{k},
    v_{k+1} \}$, we have
    \[
    \langle x, \beta \rangle = \langle \Pi_{\mathcal{L}_{k}} x, \beta
    \rangle + \langle \Pi_{\mathcal{L}_{k}^\bot} x , \beta \rangle =
    \langle x, l_{k} \rangle \cdot \langle l_{k}, \beta \rangle + \alpha_k
    \langle \tilde{o}_{k}, \beta \rangle
    \]
    since $\alpha_k = \langle v_{k}, \tilde{o}_{k} \rangle = \langle
    v_{k+1}, \tilde{o}_{k}
    \rangle$.     Plugging into \eqref{eq:exp2}, we obtain
    \eqref{eq:reduced}.
From that equation we see that, for any pair of distinct parameter
vectors $\beta$ and $\beta'$, $\PR_{n,k,\beta} = \PR_{n,k,\beta'}$
if and only if $\langle
    l_k, \beta \rangle = \langle l_k, \beta' \rangle$, i.e., if and
    only if they project to the same point in $\mathcal{L}_k$.  This proves the claim.
\end{proof}

\begin{remark}
The geometric interpretation of Proposition
    \ref{prop:minimal} is the
    following: $\beta$
    and $\beta'$ parametrize the same distribution on $\mathcal{E}_{k,n}$ if
    and only if the
    line going through them is parallel to the line spanned by $o_{k}$.
\end{remark}

    Finally, same arguments used in the proof of Proposition
    \ref{prop:closure} also imply that the closure of $\mathcal{E}_n$ along  generic
    (i.e., non-critical) directions  is comprised of point masses at the points
    $v_{k,n}$. For the next result, we do not need the condition of $n$
    being multiple of $(k+1)(k+2)$.
    \begin{corollary}\label{cor:o}
       Let $o \in \mathbb{R}^2$ be different from $o_j$, $j=-1,0,1,\ldots$
        and let $k$ be such that $o \in C_k^{\circ}$. There exists an $n_0 =
        n_0(o)$ such that, for any fixed $n>n_0$, 
and any sequence of parameters
    $\{\beta^{(i)}  \}_{i=1,2,\ldots}$ given by
    $\beta^{(i)} = \beta + r_i o$, where $\{r_i\}_{i=1,2,\ldots}$ is a sequence of positive
    numbers tending to infinity and $\beta$ is a vector in $\mathbb{R}^2$,
\[
    \lim_i \PR_{n,\beta^{(i)}} (x) = \left\{
    \begin{array}{ll}
        1 & \text{if }  x
     = v_{k,n}, \\
    0 & \text{otherwise.}\\
\end{array}
\right.
    \]
That is, $\PR_{n,\beta^{(i)}}$ converges in total variation to
    the point mass at $v_{k}$ as $i \rightarrow \infty$.
\end{corollary}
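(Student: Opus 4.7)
The plan is to mirror the proof of Proposition \ref{prop:closure}, with the facet normal $o_k$ replaced by the interior normal $o \in C_k^\circ$ and the two-point exposed face $\{v_k, v_{k+1}\}$ of $P$ replaced by the single exposed vertex $v_{k,n}$ of $P_n$. As $r_i \to \infty$, the mass of $\PR_{n, \beta + r_i o}$ should concentrate on the points of $S_n$ that maximize the linear functional $\langle \cdot, o\rangle$, and the hypothesis $o \in C_k^\circ$ forces this maximizer to be unique and equal to $v_{k,n}$ for $n$ large.

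The key preparatory step is purely geometric. Since $o \in C_k^\circ = \mathrm{cone}(o_{k-1}, o_k)^\circ$, write $o = \lambda_{k-1} o_{k-1} + \lambda_k o_k$ with $\lambda_{k-1}, \lambda_k > 0$. Because $o_{k-1}$ and $o_k$ are outer normals to the two facets of $P$ meeting at $v_k$, one has $\langle w - v_k, o_{k-1}\rangle \leq 0$ and $\langle w - v_k, o_k\rangle \leq 0$ for every $w \in P$, with equality iff $w$ lies on the corresponding facet. Hence
\[
    \langle w - v_k, o\rangle = \lambda_{k-1} \langle w - v_k, o_{k-1}\rangle + \lambda_k \langle w - v_k, o_k\rangle < 0
\]
for every extreme point $w$ of $P$ other than $v_k$, including the accumulation point $(1,1)$. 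Since the extreme points $\{v_j\}_{j \geq 0}$ of $P$ satisfy $v_j \to (1,1)$, this yields a strict gap $\gamma := \langle v_k, o\rangle - \sup_{w} \langle w, o\rangle > 0$, where the supremum is over all extreme points of $P$ other than $v_k$. I would then transfer this gap to $P_n$ via Lemma \ref{lem:Pn}: part (1) ensures that $P_n$ has only finitely many vertices (namely $v_{0,n}, \ldots, v_{\lceil n/2\rceil - 1, n}$ together with $v_{n-1,n}$), while part (2) gives $v_{j,n} \to v_j$ for each fixed $j$, and $v_{n-1,n} \to (1,1)$. A routine truncation argument then allows the choice of $n_0 = n_0(o)$ such that, for all $n > n_0$, $v_{k,n}$ uniquely maximizes $\langle \cdot, o\rangle$ over the vertices of $P_n$. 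Since a linear functional on a polytope attains its max at a vertex and the vertices of $P_n$ lie in $S_n$, $v_{k,n}$ is also the unique maximizer over the finite set $S_n$, with some strict gap $\gamma_n > 0$.

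The remainder of the proof is a verbatim repetition of the argument in Proposition \ref{prop:closure}. Dividing numerator and denominator of $\PR_{n, \beta + r_i o}(x)$ by $e^{n^2 \langle v_{k,n}, \beta + r_i o\rangle}$, each off-$v_{k,n}$ term in the resulting ratio carries an exponential factor $e^{n^2 r_i \langle x - v_{k,n}, o\rangle}$ with exponent bounded above by $-n^2 r_i \gamma_n$, which vanishes as $r_i \to \infty$. The normalizer therefore tends to $\nu_n(v_{k,n})$, while the numerator tends to $\nu_n(v_{k,n})$ at $x = v_{k,n}$ and to zero otherwise, giving total variation convergence to the point mass at $v_{k,n}$. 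The main obstacle is the geometric step of securing the gap $\gamma > 0$ despite the infinite accumulation of extreme points of $P$ at $(1,1)$; this is resolved by the cone-tiling structure depicted in Figure \ref{dir}, which ensures that $o \in C_k^\circ$ is strictly separated from the supporting cone at $(1,1)$, so the supremum in the definition of $\gamma$ is actually attained at one of the neighboring vertices $v_{k-1}$, $v_{k+1}$, or at $(1,1)$ and is strictly less than $\langle v_k, o\rangle$.
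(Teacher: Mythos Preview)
Your proposal is correct and follows essentially the same route as the paper: both arguments reduce to showing that for $n$ large the direction $o$ lies in the interior of the normal cone to $v_{k,n}$ in $P_n$ (equivalently, that $v_{k,n}$ is the unique maximizer of $\langle \cdot, o\rangle$ over $S_n$), and then invoke the exponential-tilt computation from Proposition~\ref{prop:closure}. The paper's proof is terser, simply asserting that since $v_{j,n}\to v_j$ the normal cone at $v_{k,n}$ converges to $C_k$; your write-up supplies the details the paper omits, in particular the truncation argument needed to handle the growing number of vertices of $P_n$ and their accumulation near $(1,1)$.
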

 \begin{proof}
    We only provide a brief sketch of the proof.
From Lemma \ref{lem:Pn}, $P_n$ is the convex hull of the points
$\{v_{k,n}, k=0,1\,\ldots,\lceil n/2
    \rceil -1\}$ and  $v_{n-1,n}$  and, for each fixed
    $k$, $v_{k,n} \rightarrow v_k$ as $n \rightarrow \infty$. Therefore, the
    normal cone to $v_{k,n}$ converges to $C_k$. Since by assumption $o \in
    C_k^{\circ}$, there
 exists an $n_0$, which depends on $o$ (and hence also on $k$), such
    that, for all $n>n_0$, $o$ is in the interior of the normal cone to
    $v_{k,n}$. The arguments used in the proof of Proposition
    \ref{prop:closure} yield the desired claim.
\end{proof}


%

\subsubsection*{Asymptotics of the closure of $\mathcal{E}_n$}
    We now study the asymptotic properties of the families
    $\mathcal{E}_{k,n}$ for fixed $k$ and as $n =
j(k+1)(k+2)$ for $j=1,2,\ldots$ tends to infinity.
    \begin{theorem}\label{thm:nj}
        Let $\{ n_j \}_{j=1,2,\ldots}$ be the sequence $n_j = j (k+1)(k+2)$.
        Then,
\[
    \lim_{j \rightarrow \infty} \frac{\PR_{n_j,k,\beta}(v_{k+1})}{\PR_{n_j,k,\beta}(v_k)}  \rightarrow
    \left\{
    \begin{array}{cl}
    \infty & \text{ if } \beta \in H^+_k \text{ or }  \beta \in H_k,\\
    0 & \text{ if } \beta \in H^-_k.\\
    \end{array}
    \right.
    \]
\end{theorem}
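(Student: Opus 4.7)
The plan is to expand the ratio explicitly via the minimal parametrization of Proposition \ref{prop:minimal} and then to split it into a $\beta$-dependent exponential factor and a purely combinatorial $\nu$-ratio whose asymptotics can be controlled separately. Applying \eqref{eq:reduced} at $x=v_{k+1}$ and $x=v_k$ and taking the quotient, the normalizing constants cancel and one obtains
\[
    \frac{\PR_{n_j,k,\beta}(v_{k+1})}{\PR_{n_j,k,\beta}(v_k)}
    = \frac{\nu_{n_j}(v_{k+1})}{\nu_{n_j}(v_k)}\,\exp\!\bigl(n_j^2\, d\, \langle l_k,\beta\rangle\bigr),
\]
where $d := \langle v_{k+1}-v_k, l_k\rangle$. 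Because $v_{k+1}-v_k$ lies in the one-dimensional subspace $\mathcal{L}_k$ parallel to $L_k$, it is a scalar multiple of $l_k$, and comparing first coordinates (both strictly positive) shows $d>0$. The key geometric observation is therefore that the sign of the exponent is controlled entirely by which side of $H_k = \mathcal{L}_k^{\bot}$ the vector $\beta$ lies on: it is positive on $H_k^+$, zero on $H_k$, and negative on $H_k^-$.

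The next step is to pin down the $\nu$-ratio. Since $n_j$ is a multiple of $(k+1)(k+2)$, the points $v_k$ and $v_{k+1}$ are realized by the honest Tur\'an graphs $T(n_j,k+1)$ and $T(n_j,k+2)$. Invoking Razborov's characterization \eqref{Ra} together with the strict convexity of the relevant boundary segments (so that $v_k$ and $v_{k+1}$ are genuine vertex points of the lower boundary of $R$), I would argue that the only graphs on $n_j$ vertices with density vector $v_k$ (respectively $v_{k+1}$) are labeled Tur\'an graphs with $k+1$ (respectively $k+2$) equal-size classes. This yields
\[
    \nu_{n_j}(v_k) = \frac{n_j!}{\bigl((n_j/(k+1))!\bigr)^{k+1}(k+1)!}, \quad \nu_{n_j}(v_{k+1}) = \frac{n_j!}{\bigl((n_j/(k+2))!\bigr)^{k+2}(k+2)!},
\]
and Stirling's formula gives $\nu_{n_j}(v_{k+1})/\nu_{n_j}(v_k) = p(n_j)\bigl((k+2)/(k+1)\bigr)^{n_j}$ for some rational function $p$, so this ratio grows like $\exp(\Theta(n_j))$.

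The three cases then reduce to a straightforward comparison of $\exp(\Theta(n_j))$ against $\exp\bigl(n_j^2\, d\, \langle l_k,\beta\rangle\bigr)$. If $\beta\in H_k^+$, the exponential factor is $\exp(\Omega(n_j^2))$ and overwhelms the $\nu$-ratio, pushing the product to infinity. If $\beta\in H_k$, the exponential factor equals one and the product reduces to the $\nu$-ratio itself, which already diverges. If $\beta\in H_k^-$, the exponential factor decays like $\exp(-\Theta(n_j^2))$, dominating the merely single-exponential growth of the $\nu$-ratio, so the product tends to zero.

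The main technical obstacle is the rigidity claim used in the second step. Lemma \ref{lem:Pn}(3) only asserts that no points on $L_{k,n_j}\cap S_{n_j}$ lie strictly between $v_k$ and $v_{k+1}$; it does not by itself bound the number of $n_j$-vertex graphs that realize $v_k$ or $v_{k+1}$. Matching the $\nu$-counts with the labeled Tur\'an counts requires the case of equality in Razborov's bound at the vertex points of the lower boundary curve (or an equivalent stability/uniqueness argument). For the $\beta\in H_k^+$ case a much weaker estimate of the form $\nu_{n_j}(v_k)\le \exp(o(n_j^2))$ would already suffice, but the $\beta\in H_k$ case genuinely needs the sharp leading constants $(k+1)^{n_j}$ and $(k+2)^{n_j}$ in order to separate the two Tur\'an counts and push their ratio to infinity.
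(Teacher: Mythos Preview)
Your proposal is correct and takes essentially the same approach as the paper: write the ratio as the product of the $\beta$-dependent factor $\exp\bigl(n_j^2\langle v_{k+1}-v_k,l_k\rangle\langle l_k,\beta\rangle\bigr)$ and the combinatorial ratio $\nu_{n_j}(v_{k+1})/\nu_{n_j}(v_k)$, evaluate the latter via the labeled Tur\'an counts and Stirling to get growth of order $\bigl((k+2)/(k+1)\bigr)^{n_j}$ up to polynomial corrections, and then compare the $\exp(\Theta(n_j))$ growth of the $\nu$-ratio against the $\exp(\pm\Theta(n_j^2))$ or constant behavior of the exponential factor in the three cases. The rigidity issue you flag---that every $n_j$-vertex graph with density vector $v_k$ is in fact a labeled copy of $T(n_j,k+1)$---is handled identically in the paper, which simply asserts the Tur\'an count for $\nu_{n_j}(v_k)$ without further argument.
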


\begin{remark}
The proof further shows that the ratio of probabilities diverges
or vanishes at a rate exponential in $n_j^2$.
\end{remark}

\begin{proof}

 We can write
\[
    \frac{\PR_{n_j,k,\beta}(v_{k+1})}{\PR_{n_j,k,\beta}(v_k)}  = e^{ n^2 \langle
    l_k, \beta \rangle \langle v_{k+1} - v_k, l_k\rangle} \frac{ \nu_{n}(v_{k+1})
    }{ \nu_{n}(v_k)}.
    \]

We will first analyze the limiting behavior of the dominating
measure $\nu_{n}$. We will show that, as $n \rightarrow \infty$,
the number of Tur\'{a}n graphs with $r+1$  classes is larger than
the number of Tur\'{a}n graphs with $r$ classes by a
multiplicative factor that is exponential in $n$.

\begin{lemma}\label{lemma:vu.n}
    Consider the sequence of integers $n = j(k+1)(k+2)$, where $k \geq 1$ is a
    fixed integer and $j=1,2,\ldots$.
    Then, as $n \rightarrow \infty$,
\[
\frac{\nu_n(v_{k+1})}{\nu_n(v_{k})} \asymp  \sqrt{\frac{1 }{n}}
    \left( \frac{k+2}{k+1} \right)^n,
    \]
\end{lemma}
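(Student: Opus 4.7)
The plan is to identify $\nu_n(v_k)$ and $\nu_n(v_{k+1})$ as counts of labeled Tur\'{a}n graphs and then apply Stirling's formula. Because $n$ is a multiple of $(k+1)(k+2)$, part 3 of Lemma \ref{lem:Pn} combined with the strict convexity of each branch of the Razborov curve implies that the only labeled graphs $G_n \in \mathcal{G}_n$ with $t(G_n) = v_k$ are precisely the labeled complete $(k+1)$-equipartite graphs $T(n,k+1)$ with equal class sizes $m_1 = n/(k+1)$; and analogously $t(G_n) = v_{k+1}$ forces $G_n$ to be a relabeling of $T(n, k+2)$ with class sizes $m_2 = n/(k+2)$.

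Counting labeled equipartite graphs is routine: a labeling is determined by an unordered partition of $[n]$ into equal classes, so
\begin{equation*}
\nu_n(v_k) = \frac{n!}{(m_1!)^{k+1}\, (k+1)!}, \qquad \nu_n(v_{k+1}) = \frac{n!}{(m_2!)^{k+2}\, (k+2)!}.
\end{equation*}
Taking the ratio, the $n!$ factor cancels and we get
\begin{equation*}
\frac{\nu_n(v_{k+1})}{\nu_n(v_k)} = \frac{1}{k+2} \cdot \frac{(m_1!)^{k+1}}{(m_2!)^{k+2}}.
\end{equation*}

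Next I apply Stirling's approximation $m! = \sqrt{2\pi m}\,(m/e)^m(1+o(1))$ to both $m_1!$ and $m_2!$. Since $(k+1)m_1 = (k+2)m_2 = n$, the exponential terms combine to
\begin{equation*}
\frac{(m_1/e)^{(k+1)m_1}}{(m_2/e)^{(k+2)m_2}} = \left(\frac{m_1}{m_2}\right)^n = \left(\frac{k+2}{k+1}\right)^n,
\end{equation*}
which is the dominant contribution. The surviving polynomial prefactors $(2\pi m_1)^{(k+1)/2}/(2\pi m_2)^{(k+2)/2}$ behave like a positive constant depending only on $k$ times $n^{(k+1)/2 - (k+2)/2} = n^{-1/2}$, since $m_1$ and $m_2$ are each proportional to $n$. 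The combinatorial constants $1/(k+2)$ and the $(2\pi)$ factors contribute an $O(1)$ factor independent of $n$, so the two sides are bounded ratios of one another.

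Combining these pieces yields the claimed $\asymp$ relation. The main step requiring care is the first one, namely justifying that $\nu_n(v_k)$ is exactly the Tur\'{a}n count. This uses the uniqueness (up to isomorphism) of the extremal graphs on the Razborov curve at the vertices $v_k$, which in turn relies on the divisibility assumption $n = j(k+1)(k+2)$ in order to ensure that perfectly equipartite classes of sizes $n/(k+1)$ and $n/(k+2)$ both exist. The rest is a bookkeeping exercise with Stirling's formula, where the crux is to notice that $(k+1)m_1 = (k+2)m_2 = n$ makes the exponential factors align neatly into $((k+2)/(k+1))^n$, and the slight mismatch in the number of square-root factors produces the $n^{-1/2}$ correction.
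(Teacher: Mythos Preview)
Your proof is correct and follows essentially the same route as the paper: identify $\nu_n(v_k)$ and $\nu_n(v_{k+1})$ as the numbers of labeled complete $(k+1)$- and $(k+2)$-equipartite graphs on $n$ vertices, write these as multinomial counts $n!/\big((m_i!)^{\,k+i}\,(k+i)!\big)$, and apply Stirling's formula, using $(k+1)m_1=(k+2)m_2=n$ to collapse the exponential part to $((k+2)/(k+1))^n$ and the mismatched square-root factors to the $n^{-1/2}$. If anything, you are slightly more explicit than the paper in justifying the first identification via Razborov's uniqueness and the divisibility hypothesis.
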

\begin{proof}
    Recall that $\nu_{n}(v_{k})$ is the number of (simple, labeled) graphs
    on $n$ nodes isomorphic to a Tur\'{a}n graph with $(k+1)$ classes each of size
    $j(k+2)$,
    and that $\nu_{n}(v_{k+1})$ is the number of (simple, labeled) graphs
    on $n$ nodes isomorphic to a Tur\'{a}n graph with $(k+2)$ classes each of size $j(k+1)$.
    Thus,
\[
    \nu_n(v_k) = \frac{1}{(k+1)!} \frac{n!}{ \left[ \left( j(k+2)  \right) !
    \right]^{k+1} }
    \]
    and
\[
    \nu_n(v_{k+1}) = \frac{1}{(k+2)!} \frac{n!}{ \left[ \left( j(k+1)  \right) !
    \right]^{k+2} }.
    \]
Next, since $n = j (k+1)(k+2)$, using Stirling's approximation we have that
\[
    \left( \left( j(k+2)  \right)!  \right)^{k+1} \sim (2 \pi j(k+2)
        )^{(k+1)/2}
    e^{- j(k+2)(k+1)} (j(k+2))^{j(k+2)(k+1)}\]
\[=(2 \pi j(k+2) )^{(k+1)/2}
    e^{- n} (j(k+2))^{n},
\]
and, similarly,
\[
    \left( \left( j(k+1)  \right)!  \right)^{k+2} \sim (2 \pi j(k+1)
        )^{(k+2)/2}
    e^{- j(k+1)(k+2)} (j(k+1))^{j(k+1)(k+2)}\]
\[=(2 \pi j(k+1) )^{(k+2)/2}
    e^{- n} (j(k+1))^{n}.
    \]
    Therefore,
    \begin{eqnarray*}
    \frac{\nu_n(v_{k+1})}{\nu_n(v_{k})} & \sim &  \frac{(k+1)!}{(k+2)!} \frac{
        (2 \pi)^{(k+1)/2}} {(2
    \pi)^{(k+2)/2}}\frac { \left( j(k+2) \right)^{(k+1)/2} }{\left( j(k+1)
        \right)^{(k+2)/2}
    } \frac{ \left( j(k+2) \right)^n}{ \left( j(k+1) \right)^n}\\
    & = &  \left[ \frac{(k+1)!}{(k+2)!} \frac{ (2 \pi)^{(k+1)/2}} {(2
        \pi)^{(k+2)/2}}
    \left( \frac{k+2}{k+1} \right)^{(k+1)/2} \sqrt{k+2}  \right] \sqrt{\frac{1 }{n}}
    \left( \frac{k+2}{k+1} \right)^n, \\
\end{eqnarray*}
    where we have used the fact that $j = \frac{n}{(k+1)(k+2)}$ for each $n$.
\end{proof}

Basic geometry considerations yield that, for any $\beta \in
\mathbb{R}^2$,
    \begin{equation*}\label{eq:sign}
\langle l_k, \beta \rangle \left\{
    \begin{array}{cr}
    >0 & \text{ if } \beta \in H^+_k,\\
    <0 & \text{ if } \beta \in H^-_k,\\
    =0 & \text{ if } \beta \in H_k.\\
    \end{array}
    \right.
\end{equation*}
Next, we have that
\[
    \langle v_{k+1} - v_{k}, l_k \rangle >0,
    \]
    since
    \[
    l_k  = \frac{1}{\sqrt{ 1 +\left( \frac{k(3k+5)}{(k+1)(k+2)} \right)^2 } } \left(
    \begin{array}{c}
            1\\
            \frac{k(3k+5)}{(k+1)(k+2)}
    \end{array} \right) \quad \text{and} \quad
    v_{k+1} - v_k = \left(
    \begin{array}{c}
    \frac{1}{(k+1)(k+2)}\\
    \frac{k(3k+5}{(k+1)^2(k+2)^2}
    \end{array}
    \right)
    \]
    are parallel vectors with positive entries.

 By Lemma \ref{lemma:vu.n}, we finally conclude that
\[
    \frac{\PR_{n_j,k,\beta}(v_{k+1})}{\PR_{n_j,k,\beta}(v_k)} \asymp  e^{n^2 C_k(\beta)} \sqrt{\frac{1 }{n}}
    \left( \frac{k+2}{k+1} \right)^n.
    \]
where $C_k(\beta) = \langle l_k, \beta \rangle \langle v_{k+1} -
v_k, l_k\rangle$. The result now follows since the term $e^{n^2
C_k(\beta)}$ dominates the other term and
$\mathrm{sign}(C_k(\beta)) = \mathrm{sign}( \langle l_k, \beta
\rangle)$.
\end{proof}

\begin{proof}[Proofs  of Theorems  \ref{thm:main.2},
    \ref{thm:main}, \ref{thm:main.3}, and \ref{thm:main.4}]
We first consider Theorem  \ref{thm:main}. Assume that $\beta \in
H^+_k$  or $\beta \in H_k$. Then by Theorem \ref{thm:nj}, there
exists an $n_0 = n_0(\beta, \epsilon ,k)$ such that, for all
$n>n_0$ and a multiple of $(k+1)(k+2)$,  $\PR_{n,k,\beta}(v_{k+1})
> 1 - \epsilon/2$ (recall that, by Proposition \ref{prop:minimal}, $ \PR_{n,k,\beta}(v_{k+1}) + \PR_{n,k,\beta}(v_{k})
= 1$). Let $n$ be an integer larger than
$n_0$ and a multiple of $(k+1)(k+2)$. By Proposition
\ref{prop:closure}, there exists an $r_0 =
r_0(\beta,\epsilon,k,n)$ such that, for all $r>r_0$, $\PR_{n,\beta
+ r o_k}(v_{k+1})
> \PR_{n,k,\beta}(v_{k+1}) - \epsilon/2$. Thus, for these values of
$n$ and $r$,
\[
    \PR_{n,\beta + r o_k}(v_{k+1}) > \PR_{n,k,\beta}(v_{k+1}) - \epsilon/2 > 1 -
    \epsilon/2 - \epsilon/2 = 1 - \epsilon,
    \]
as claimed. The case of $\beta \in H^-_k$ is proved in the same
way.

For Theorem \ref{thm:main.2}, we use Corollary \ref{cor:o}, which
guarantees that there exists an integer $n_0 = n_0(\beta,
\epsilon, o)$ such that, for any integer $n > n_0$, there exists
an $r_0 = r_0(\beta, \epsilon, o, n)$ such that for any $r>r_0$,
\[
    \PR_{n,\beta + r o}(v_{k,n}) > 1 - \epsilon.
    \]

Theorem \ref{thm:main.3} is proved as a direct corollary of
Proposition \ref{prop:minimal} along with simple algebra. Finally,
Theorem \ref{thm:main.4} follows from similar arguments used in
the proof of Proposition \ref{prop:minimal}.

\end{proof}

\begin{proof}[Proof of Theorem \ref{main.further}]
Subject to $\beta_1=a\beta_2+b$, the variational problem takes the following form: Find $f(x, y)$ so that
\begin{equation}
\beta_2 (ae+t^\gamma)+be-\iint_{[0,1]^2}I(f(x, y))dxdy
\end{equation}
is maximized, where $e=t(H_1, f)$ denotes the edge density and
$t=t(H_2, f)$ denotes the triangle density of $f$, respectively.
As in the proof of Theorem \ref{thm.at}, we see that as $\beta_2\rightarrow \infty$, the limiting optimizer $f^*$ must maximize
$ae+t^\gamma$. This implies that $f^*$ must lie on the curve $t=e^{3/2}$
(i.e., upper boundary of the feasible region) (see Figure
\ref{et}). Consider $g(e)=ae+e^{\frac{3}{2}\gamma}$. It is clear that if $\gamma\geq 2/3$, then $g''(e)\geq 0$ for $e\in (0,1)$, which implies that the maximizer $f^*$ is attained at either the empty graph or the complete graph. Further investigations show that same conclusions hold as in the standard model where $\gamma=1$. When $\gamma<2/3$, there are two situations. If $a\geq -\frac{3}{2}\gamma$, then $g'(e)\geq 0$ on $(0,1)$ always and the maximizer $f^*$ is given by the complete graph. If $a<-\frac{3}{2}\gamma$, then $g(e)$ is first increasing and then decreasing on $(0,1)$, and the optimal edge density $e^*$ satisfies $(e^*)^{\frac{3}{2}\gamma-1}=-\frac{2a}{3\gamma}$. This says that the maximizer $f^*$ has a non-trivial structure. It represents a complete subgraph coupled with isolated vertices, and the size of the complete subgraph is determined by $e^*$ \cite{L2}. We note that as $a$ decays from $-\frac{3}{2}\gamma$ to $-\infty$, the non-trivial graph transitions from being almost complete to almost empty.
\end{proof}


\begin{thebibliography}{99}

\bibitem{Aldous} Aldous, D.: Representations for partially exchangeable arrays of random
variables. J. Multivariate Anal. 11, 581-598 (1981)

\bibitem{BRN:78} Barndorff-Nielsen, O.: Information and Exponential Families in Statistical Theory.
John Wiley \& Sons, New York (1978)

\bibitem{B} Bhamidi, S., Bresler, G., Sly, A.: Mixing time of exponential random
graphs. Ann. Appl. Probab. 21, 2146-2170 (2011)

\bibitem{B:76} Bollob\'{a}s, B: Relations between sets of complete subgraphs. In: Nash-Williams, C., Sheehan, J.
(eds.) Proceedings of the Fifth British Combinatorial Conference,
Congressus Numerantium, No. XV, pp. 79-84. Utilitas Mathematica
Publishing, Winnipeg (1976)


\bibitem{Bo} Bollob\'{a}s, B.: Random Graphs. Volume 73 of Cambridge Studies in Advanced
Mathematics. 2nd ed. Cambridge University Press, Cambridge (2001)

\bibitem{BCLSV1} Borgs, C., Chayes, J., Lov\'{a}sz, L., S\'{o}s, V.T., Vesztergombi, K.:
Counting graph homomorphisms. In: Klazar, M., Kratochvil, J.,
Loebl, M., Thomas, R., Valtr, P. (eds.) Topics in Discrete
Mathematics, Volume 26, pp. 315-371. Springer, Berlin (2006)

\bibitem{BCLSV2} Borgs, C., Chayes, J.T., Lov\'{a}sz, L., S\'{o}s, V.T., Vesztergombi, K.:
Convergent sequences of dense graphs I. Subgraph frequencies,
metric properties and testing. Adv. Math. 219, 1801-1851 (2008)

\bibitem{BCLSV3} Borgs, C., Chayes, J.T., Lov\'{a}sz, L., S\'{o}s, V.T., Vesztergombi, K.:
Convergent sequences of dense graphs II. Multiway cuts and
statistical physics. Ann. of Math. 176, 151-219 (2012)

\bibitem{BROWN:86} Brown, L.: Fundamentals of Statistical Exponential Families.
IMS Lecture Notes -- Monograph Series, Volume 9. Institute of
Mathematical Statistics, Hayward (1986)

\bibitem{CD} Chatterjee, S., Diaconis, P.: Estimating and
understanding exponential random graph models. arXiv: 1102.2650 (2011)


\bibitem{CV} Chatterjee, S., Varadhan, S.R.S.: The large deviation principle for
the Erd\H{o}s-R\'{e}nyi random graph. European J. Combin. 32,
1000-1017 (2011)

\bibitem{CS:05} Csisz\'{a}r, I., Mat\'{u}\v{s}, F.:
Closure of exponential families. Ann. Probab. 33, 582-600 (2005)

\bibitem{CS:08} Csisz\'{a}r, I., Mat\'{u}\v{s}, F.:
Generalized maximum likelihood estimates for exponential families.
Probab. Theory Related Fields 141, 213-246 (2008)



\bibitem{DJ:08} Diaconis, P., Janson, S.: Graph limits and exchangeable
    random graphs. Rendiconti di Matematica 28, 33-61
    (2008)

\bibitem{Diestel} Diestel, R.: Graph Theory. Springer, New York (2005)


\bibitem{EN:11} Engstr\"{o}m, A., Nor\'{e}n, P.:
Polytopes from subraph statistics. arXiv: 1011.3552 (2011)

\bibitem{EKR:76} Erd\H{o}s, P., Kleitman, D.J., Rothschild, B.L.:
Asymptotic enumeration of $K_n$-free graphs. International
Colloquium on Combinatorial Theory, Volume 2, pp. 19-27. Atti dei
Convegni Lincei, Rome (1976)


\bibitem{F} Fadnavis, S.: Graph colorings and graph limits.
\\http://purl.stanford.edu/dr620kx9292 (2012)

\bibitem{F1} Fienberg, S.: Introduction to papers on the
modeling and analysis of network data. Ann. Appl. Statist. 4, 1-4
(2010)

\bibitem{F2} Fienberg, S.: Introduction to papers on the
modeling and analysis of network data II. Ann. Appl. Statist. 4,
533-534 (2010)

\bibitem{Fisher} Fisher, D.C.: Lower bounds on the number of triangles in a
    graph. J. Graph Theory 13, 505-512 (1989)

\bibitem{FS} Frank, O., Strauss, D.: Markov graphs. J. Amer. Statist. Assoc.
81, 832-842 (1986)

\bibitem{GEYER:09} Geyer, C.J.: Likelihood inference in exponential families and directions of recession.
Electron. J. Stat. 3, 259-289 (2009)

\bibitem{geyer-thompson} Geyer, C.J., Thompson, E.A.: Constrained Monte
Carlo maximum likelihood for dependent data. J. R. Statist. Soc. B
54, 657-699 (1992)

\bibitem{GXFA} Goldenberg, A., Zheng, A.X., Fienberg, S.E., Airoldi, E.M.:
A survey of statistical network models. Found. Trends Mach. Learn.
2, 129-233 (2009)

\bibitem{Goodman} Goodman, A.W.:
On sets of acquaintances and strangers at any party. Amer. Math.
Monthly 66, 778-783 (1959)


\bibitem{HJ} H\"{a}ggstr\"{o}m, O., Jonasson, J.: Phase transition in the random triangle
model. J. Appl. Probab. 36, 1101-1115 (1999)

\bibitem{H:03} Handcock, M.S.: Assessing degeneracy in statistical models of social networks.
Working paper no. 39, Center for Statistics and the Social
Sciences, University of Washington (2003)

\bibitem{HL} Holland, P., Leinhardt, S.: An exponential family of probability
distributions for directed graphs. J. Amer. Statist. Assoc. 76,
33-50 (1981)

\bibitem{Hoover} Hoover, D.: Row-column exchangeability and a generalized model for
probability. In: Koch, G., Spizzichino, F. (eds.) Exchangeability
in Probability and Statistics, pp. 281-291. North-Holland,
Amsterdam (1982)


\bibitem{ergm} Hunter, D.R., Handcock, M.S., Butts, C.T., Goodreau,
S.M., Morris, M.: ergm: A package to fit, simulate and diagnose
exponential-family models for networks. J. Statist. Softw. 24,
1-29 (2008)

\bibitem{Kallenberg:2005} Kallenberg, O.: Probabilistic Symmetries and
Invariance Principles. Springer, New York (2005)

\bibitem{Kol} Kolaczyk, E.D.: Statistical Analysis of Network Data: Methods and
Models. Springer, New York (2009)

\bibitem{Lauritzen:2003} Lauritzen, S.L.: Rasch models with exchangeable rows
and columns. In: Bernardo, J.M., Bayarri, M.J., Berger, J.O.,
Dawid, A.P., Heckerman, D., Smith, A.F.M., West, M. (eds.)
Bayesian Statistics 7, pp. 215-232. Oxford University Press,
Oxford (2003)

\bibitem{L1} Lov\'{a}sz, L.: Very large graphs.
In: Jerison, D., Mazur, B., Mrowka, T., Schmid, W., Stanley, R.,
Yau, S.T. (eds.) Current Developments in Mathematics, Volume 2008,
pp. 67-128. International Press, Boston (2009)

\bibitem{L2} Lov\'{a}sz, L.: Large Networks and Graph Limits.
American Mathematical Society, Providence (2012)

\bibitem{LS1} Lov\'{a}sz, L., Simonovits, M.: On the number of complete
    subgraphs of a graph II. In: Erd\H{o}s, P. et al. (eds.) Studies in Pure Mathematics, To the Memory of Paul
    Tur\'{a}n, pp. 459-495. Springer, Basel (1983)

\bibitem{LS} Lov\'{a}sz, L., Szegedy, B.: Limits of
dense graph sequences. J. Combin. Theory Ser. B 96, 933-957 (2006)



\bibitem{LZ} Lubetzky, E., Zhao, Y.: On replica symmetry of large deviations in random graphs. arXiv:
1210.7013 (2012)

\bibitem{Ma} Ma, S.-K.: Statistical Mechanics. World Scientific, Singapore (1985)


\bibitem{Newman.book} Newman, M.: Networks: An Introduction. Oxford University
    Press, New York (2010)

\bibitem{ERGM4} Pattison, P.E., Wasserman, S.: Logit models and logistic regressions for social networks: II. Multivariate relations.
Br. J. Math. Stat. Psychol. 52,
169-194 (1999)

\bibitem{Pik} Pikhurko, O.: An analytic approach to stability. Discrete
Math. 310, 2951-2964 (2010)

\bibitem{RS} Radin, C., Sadun, L.: Phase transitions in a complex
network. J. Phys. A 46, 305002 (2013)

\bibitem{RY} Radin, C., Yin, M.: Phase transitions in
exponential random graphs. Ann. Appl. Probab. 23, 2458-2471 (2013)

\bibitem{Razborov} Razborov, A.: On the minimal density of triangles in graphs.
Combin. Probab. Comput. 17, 603-618 (2008)

\bibitem{R} Rinaldo, A., Fienberg, S., Zhou, Y.: On the geometry of discrete
exponential families with application to exponential random graph
models. Electron. J. Stat. 3, 446-484 (2009)


\bibitem{ERGM1} Robins, G.L., Pattison, P.E., Kalish, Y., Lusher, D.: An
    introduction to exponential random graph (p*) models for
social networks. Social Networks 29, 173-191 (2007)

\bibitem{ERGM2} Robins, G.L., Pattison, P.E., Wasserman, S.: Logit models and logistic regressions for social networks: III. Valued
relations. Psychometrika 64, 371-394 (1999)

\bibitem{SR:13} Shalizi, C.R., Rinaldo, A.:
Consistency under sampling of exponential random graph models.
Ann. Statist. 41, 508-535 (2013)

\bibitem{S} Snijders, T., Pattison, P., Robins, G., Handcock, M.: New specifications for
exponential random graph models. Sociol. Method. 36, 99-153 (2006)



\bibitem{WF} Wasserman, S., Faust, K.: Social Network Analysis: Methods and
Applications. Cambridge University Press, Cambridge (2010)

\bibitem{ERGM3} Wasserman, S., Pattison, P.E.: Logit models and logistic
    regressions for social networks: I. An introduction to Markov graphs and
    p*. Psychometrika 61, 401-425 (1996)

\bibitem{WS} Watts, D., Strogatz, S.: Collective dynamics of
`small-world' networks. Nature 393, 440-442 (1998)

\bibitem{Yan.Xu} Yan, T., Xu, J.: A central limit theorem in the
    $\beta$-model for undirected random graphs with a diverging number of vertices.
    Biometrika 100, 519-524 (2013)

\end{thebibliography}
\end{document}